\def\normalsize{\@setfontsize\normalsize\@xipt{11}}
\theoremstyle{plain}
\newtheorem{theorem}{Theorem}
\newtheorem{lemma}{Lemma}
\theoremstyle{definition}
\theoremstyle{remark}
\begin{document}

\begin{center}
\LARGE
Eigenvalue Asymptotics for the Schr\"{o}dinger Operator with a Matrix Potential in a Single Resonance Domain
\end{center}
\vspace{1cm}

\normalsize
\centerline{Sedef KARAK{\l}L{\l}\c{C}}\centerline{Department of
Mathematics, Faculty of Science, Dokuz Eyl\"{u}l
University,} \centerline{T{\i}naztepe Camp., Buca, 35160, Izmir,
Turkey} \centerline{sedef.erim@deu.edu.tr}
\vspace{0.5cm}

\centerline{Setenay AKDUMAN} \centerline{Department of Mathematics,
Faculty of  Science, Dokuz Eyl\"{u}l University,}
\centerline{T{\i}naztepe Camp., Buca, 35160, Izmir, Turkey}
\centerline{setenayakduman@gmail.com}

\begin{abstract}

We consider a Schr\"{o}dinger Operator with a matrix potential defined  in $L_2^m(F)$ by the differential expression
    \begin{equation*}
    L(\phi(x))=(-\Delta+V(x))\phi(x)
    \end{equation*}
     and the Neumann boundary condition, where $F$ is the $d$ dimensional rectangle and $V$ is a martix potential, $m\geqslant 2, d\geqslant 2$. We obtain the asymptotic formulas of arbitrary order for the single resonance eigenvalues of the Schr\"{o}dinger operator in $L_2^m(F)$. \\

\textbf{Keywords:} Schr\"{o}dinger operator, Neumann condition,
perturbation, matrix potential

\textbf{AMS Subject Classifications:} 47F05, 35P15

\end{abstract}
\newpage
\begin{center}
\large
Introduction
\end{center}
\normalsize
\par We consider the Schr\"{o}dinger Operator with a matrix potential $V(x)$ defined  by the differential expression
    \begin{equation}
    L(\phi(x))=(-\Delta+V(x))\phi(x)
    \end{equation}
     and the Neumann boundary condition
     \begin{equation}
     \frac{\partial \phi }{\partial n} |_{\partial F}=0,
     \end{equation}
     in $L_2^m(F)$ where $F$ is the $d$ dimensional rectangle $F=[0,a_1]\times[0,a_2]\times\ldots \times [0,a_d],$ $\partial F$ is the  boundary of $F$, $m\geqslant 2, d\geqslant 2$,  $\frac{\partial}{\partial n}$ denotes differentiation along the outward normal of the boundary $\partial F$, $\boldsymbol{\Delta}$ is a diagonal $m \times m$ matrix whose diagonal elements are the scalar Laplace operators $\Delta=\frac{\partial ^2}{\partial {x_1}^2}+\frac{\partial ^2}{\partial {x_2}^2}+\ldots+\frac{\partial ^2}{\partial {x_d}^2}$ , $x=(x_1,x_2,\ldots,x_d)\in \boldsymbol{R}^d$, $V$ is a real valued symmetric matrix $V(x)=(v_{ij}(x)), i, j=1,2,\ldots,m, v_{ij}(x)\in L_2(F)$, that is, $V^T(x)=V(x).$
     \par We denote the operator defined by (1)-(2) by $L(V)$, and the eigenvalues and corresponding eigenfunctions of $L(V)$ by $\Lambda_{N}$ and $\Psi_{N}$, respectively.
     \par The eigenvalues of the operator $L(0)$ which is defined
     by (1) when $V(x)=0$ and the boundary condition (2)
     are $\mid \gamma\mid^{2}$ and the corresponding eigenspaces are
     \begin{equation*}
     E_{\gamma}=span\{\Phi_{\gamma,1}(x),\Phi_{\gamma,2}(x),\ldots,\Phi_{\gamma,m}(x)\},
     \end{equation*}
     where $\gamma \in
     \frac{\Gamma^{+0}}{2}=\{(\frac{n_{1}\pi}{a_{1}},\frac{n_{2}\pi}{a_{2}}\cdots,\frac{n_{d}\pi}{a_{d}}):\quad
     n_{k}\in Z^{+}\bigcup\{0\},\quad k=1,2,\ldots,d\}$,\\
     \\$\Phi_{\gamma,j}(x)=(0,\ldots,0,u_{\gamma}(x),0,\ldots,0)$,\quad
     $j=1,2,\ldots,m$,\quad$u_{\gamma}(x)=cos
     \frac{n_{1}\pi}{a_{1}}x_{1}cos \frac{n_{2}\pi}{a_{2}}x_{2}\cdots cos
     \frac{n_{d}\pi}{a_{d}}x_{d}$, $u_{0}(x)=1$ when
     $\gamma=(0,0,\ldots,0)$. We note that the non-zero component
     $u_{\gamma}(x)$ of $\Phi_{\gamma,j}(x)$ stands in the $j$th
     component.
\par It can be easily calculated that the norm of $u_{\gamma}(x)$, $\gamma=(\gamma^{1},\gamma^{2},\ldots,\gamma^{d})\in\frac{\Gamma^{+0}}{2}$
in $L_{2}(F)$ is $\sqrt{\frac{\mu(F)}{\mid A_{\gamma}\mid}}$, where $\mu(F)$ is the measure of the $d$-dimensional parallelepiped $F$, $\mid A_{\gamma}\mid$ is the number of vectors in $A_{\gamma}=\{\alpha=(\alpha_{1},\alpha_{2},\ldots,\alpha_{d})\in\frac{\Gamma}{2}:
\quad \mid\alpha_{k}\mid=\mid\gamma^{k}\mid,\quad k=1,2,\ldots,d\}$\;\; and \\ $\frac{\Gamma}{2}=\{(\frac{n_{1}\pi}{a_{1}},\frac{n_{2}\pi}{a_{2}}\cdots,\frac{n_{d}\pi}{a_{d}}):\quad
n_{k}\in Z,\quad k=1,2,\ldots,d\}$.
\par Since $\{u_{\gamma}(x)\}_{\gamma\in\frac{\Gamma^{+0}}{2}}$ is a complete system in $L_{2}(F)$, for any $q(x)$
in $L_{2}(F)$ we have
\begin{equation}\label{q1}
q(x)=\sum_{\gamma\in\frac{\Gamma^{+0}}{2}}\frac{\mid
A_{\gamma}\mid}{\mu(F)}(q,u_{\gamma})u_{\gamma}(x),
\end{equation}
where $(\cdot,\cdot)$ is the inner product in $L_{2}(F)$.
\par In our study, it is convenient to use the equivalent decomposition (see \cite{Karalilic1})
\begin{equation}\label{q2}
q(x)=\sum_{\gamma\in\frac{\Gamma}{2}}q_{\gamma}u_{\gamma}(x),
\end{equation}
where $q_{\gamma}=\frac{1}{\mu(F)}(q(x),u_{\gamma}(x))$ for the sake
of simplicity. That is, the decomposition \eqref{q1} and \eqref{q2} are equivalent for any $d\geq 1.$
\par Each matrix element $v_{ij}(x)\in L_2(F)$ of the matrix $V(x)$ can be written in its Fourier series expansion
\begin{equation}\label{sumv}
    v_{ij}(x)=\sum_{\gamma\in
\frac{\Gamma}{2}}v_{ij\gamma}u_{\gamma}(x)
\end{equation}
for $i,j=1,2,\ldots,m$ where
$v_{ij\gamma}=\frac{(v_{ij},u_{\gamma})}{\mu(F)}$.
\par We assume that the Fourier coefficients $v_{ij\gamma}$ of $v_{ij}(x)$ satisfy
\begin{equation}\label{coeff}
\sum_{\gamma\in\frac{\Gamma}{2}}\mid v_{ij\gamma}\mid^{2}(1+\mid
\gamma\mid^{2l})<\infty,
\end{equation}
for each $i,j=1,2,\ldots,m$,\;\;  $l>\frac{(d+20)(d-1)}{2}+d+3$
which implies
\begin{equation}\label{decomp}
v_{ij}(x)=\sum_{\gamma\in\Gamma^{+0}(\rho^{\alpha})}v_{ij\gamma}u_{\gamma}(x)+
O(\rho^{-p\alpha}),
\end{equation}
where $\Gamma^{+0}(\rho^{\alpha})=\{\gamma\in\frac{\Gamma}{2}:0\leq\mid
\gamma\mid<\rho^{\alpha}\}$, $p=l-d$, $\alpha<\frac{1}{d+20}$,
$\rho$ is a large parameter and $O(\rho^{-p\alpha})$ is a function
in $L_{2}(F)$ with norm of order $\rho^{-p\alpha}$. Furthermore, by \eqref{coeff}, we have
\begin{equation}\label{decompsum}
    M_{ij}\equiv\sum_{\gamma\in\frac{\Gamma}{2}}\mid v_{ij\gamma}\mid<\infty,
\end{equation}
for all $i,j=1,2,\ldots,m$.
\par Notice that, if a function $q(x)$ is sufficiently smooth $\left(q(x)\in W_2^l(F)\right)$ and the support of $gradq(x)=\left(\frac{\partial q}{\partial x_1},\frac{\partial q}{\partial x_2},\ldots,\frac{\partial q}{\partial x_d}\right)$ is contained in the interior of the domain $F$, then $q(x)$ satisfies condition \eqref{coeff} (See \cite{Hald}). There is also another class of functions $q(x)$, such that $q(x)\in W_2^l(F)$,
\begin{align*}
q(x)=\sum_{\gamma^{'}\in \Gamma }q_{\gamma^{'}}u_{\gamma^{'}}(x),
\end{align*}
which is periodic with respect to a lattice $\Omega=\left\{(m_1a_1,m_2a_2,\ldots,m_da_d):m_k\in \boldsymbol{Z},k=1,2,\ldots,d \right\}$
and thus it also satisfies condition \eqref{coeff}.
\par One of the essential problems related to this operator $L(V)$ is how the eigenvalues $\lvert \gamma \lvert^2$ of the unperturbed operator $L(0)$ is affected under perturbation. We study this problem by using energy as a large parameter, in other words when $\lvert \gamma \lvert \sim \rho$, that is, there exist positive constants $c_1,$ $c_2$ such that $c_1 \rho < \lvert \gamma \lvert  <c_2\rho$, \; $c_1,$ $c_2$ do not depend on $\rho$ and $\rho$ is a big parameter. In the sequel, we denote by $c_i$, $i=1,2,\ldots,$ the positive constants which does not depend on $\rho.$
\par For the scalar case, $m=1$, a method in which for the firts time the eigenvalues of the unperturbed operator $L(0)$ were divided into two groups: non-resonance ones and resonance ones
 was first introduced by O. Veliev in \cite{Veliev1}
and more recently in \cite{Veliev4}, \cite{Veliev5} to obtain
various asymptotic formulas for the eigenvalues of the periodic
Schr\"{o}dinger operator with quasiperiodic boundary conditions
corresponding to each group. By some other methods, \\asymptotic
formulas for quasiperiodic boundary conditions in two and three
dimensional cases are obtained in \cite{Feldman1},
\cite{Friedlanger}, \cite{Karpeshina1}, \cite{Karpeshina2} and
\cite{Hald}. When this operator is considered with Dirichlet
boundary condition in two dimensional rectangle, the asymptotic
formulas for the eigenvalues are obtained in \cite{Hald}. The
asymptotic formulas for the eigenvalues of the Schr\"{o}dinger
operator with Dirichlet or Neumann boundary conditions in an
arbitrary dimension are obtained in \cite{Atilgan},
\cite{Karalilic1} and \cite{Karakilic2}. For the matrix case,
asymptotic formulas for the eigenvalues of the Schr\"{o}dinger
operator with quasiperiodic boundary conditions are obtained in
\cite{Karpeshina2}.
\par As in \cite{Veliev1}- \cite{Veliev5}, we divide $R^{d}$ into two domains:
Resonance and Non-resonance domains. \\In order to define these
domains,
let us introduce the following sets:
\par Let $\alpha<\frac{1}{d+20}$, $\alpha_{k}=3^{k}\alpha$,
$k=1,2,\ldots,d-1$ and
\begin{center}
$V_{b}(\rho^{\alpha_{1}})\equiv\left\{x\in R^{d}:\quad\left\lvert\lvert
x \lvert
^{2}- \lvert x+b \lvert^{2}\right \lvert<\rho^{\alpha_{1}}\right\}$,
\end{center}
\begin{center}
$E_{1}(\rho^{\alpha_{1}},p)\equiv\bigcup\limits_{{b}\in\Gamma(p\rho^{\alpha})}$
$V_{b}(\rho^{\alpha_{1}})$,
\end{center}
\begin{center}
$U(\rho^{\alpha_{1}},p)\equiv R^{d}\setminus
E_{1}(\rho^{\alpha_{1}},p)$,
\end{center}
\begin{center}
$E_k(\rho^{\alpha_k},p)=\bigcup\limits_{\gamma_1,\gamma_2,\ldots,\gamma_k\in \Gamma(p\rho^{\alpha})}\left(\bigcap\limits_{i=1}^kV_{\gamma_i}(\rho^{\alpha_k})\right)$,
\end{center}
where $\Gamma(p\rho^{\alpha})\equiv\left\{b\in\frac{\Gamma}{2}:0<\mid
b\mid<p\rho^{\alpha}\right\}$ and the intersection $\bigcap\limits_{i=1}^kV_{\gamma_i}(\rho^{\alpha_k}) $ in $E_k$ is taken over $\gamma_1,\gamma_2,\ldots,\gamma_k$ which are linearly independent vectors and the length of $\gamma_i$ is not greater than the length of the other vector in $\Gamma \bigcap \gamma_i\boldsymbol{R}.$ The set $U(\rho^{\alpha_{1}},p)$ is said to be a non-resonance domain, and the eigenvalue $\lvert \gamma \lvert^2$ is called a non-resonance eigenvalue if $\gamma\in U(\rho^{\alpha_{1}},p).$ The domains $V_{b}(\rho^{\alpha_{1}})$, for $b\in \Gamma(p\rho^{\alpha})$ are called resonance domains and the eigenvalue $\lvert \gamma \lvert^2$ is a resonance eigenvalue if $\gamma \in V_{b}(\rho^{\alpha_{1}})$.
\par As noted in \cite{Veliev4} and \cite{Veliev5}, the domain $V_{b}(\rho^{\alpha_{1}})\setminus E_2$, called a single resonance domain, has asymptotically full measure on $V_{b}(\rho^{\alpha_{1}}),$ that is,
\begin{align*}
\frac{\mu\left(\left(V_{b}(\rho^{\alpha_{1}})\setminus E_2\right) \bigcap B(q)\right) }{\mu\left(V_{b}(\rho^{\alpha_{1}})\bigcap B(q) \right)}\to 1,\text{ as  } \rho\to \infty,
\end{align*}
where $B(\rho)=\left\{x\in \boldsymbol{R}^d:\lvert x \lvert =\rho \right\}$, if
\begin{align}\label{alfa}
2\alpha_2-\alpha_1+(d+3)\alpha<1 \text{  and  } \alpha_2>2\alpha_1,
\end{align}
hold. Since $\alpha<\frac{1}{d+20}$, the conditions in \eqref{alfa} hold.
\par When $m\geq 2$, in \cite{coskan1}, in an arbitrary dimension, the asymptotic formulas of arbitrary order for the eigenvalue of the operator $L(V)$ which corresponds to the non-resonance eigenvalue $\lvert \gamma \lvert^2$ of $L(0)$ are obtained.
 \par In this paper, we obtain the high energy asymptotics of arbitrary order in an arbitrary dimension $(d\geq 2)$ for the eigenvalue of $L(V)$ corresponding to resonance eigenvalue  $\lvert \gamma \lvert^2$ when $\gamma$ belongs to the single resonmance domain, that is, $\gamma\in V_{\delta}(\rho^{\alpha_1})\setminus E_2,$ where $\delta$ is from $\{e_1,e_2,\ldots,e_d\}$ and $e_1=\left(\frac{\pi}{a_1},0,\ldots,0\right),\ldots,e_d=\left(0,\ldots,\frac{\pi}{a_d}\right)$.
\begin{center}
\large
Eigenvalues In a Special Single Resonance Domain
\end{center}
\normalsize
\par Now let $H_\delta=\{x\in \boldsymbol{R}:  x \cdot \delta=0\}$ be the hyperplane which is orthogonal to $\delta$. Then we define the following sets:
\begin{equation*}
\Omega_\delta=\{\omega \in \Omega: w\cdot \delta=0 \}=\Omega \cap H_\delta,
\end{equation*}
\begin{equation*}
\Gamma_\delta=\{\gamma \in \frac{\Gamma}{2}: \gamma \cdot \delta=0\}=\frac{\Gamma}{2}\cap H_\delta.
\end{equation*}
Here
$`` \cdot "$ denotes the inner product in $\boldsymbol{R}^d$. Clearly, for all $\gamma \in \frac{\Gamma}{2}$, we have the following decomposition
\begin{equation}\label{gamma}
\gamma=j\delta+\beta, \beta \in \Gamma_\delta, j \in \boldsymbol{Z}.
\end{equation}
Note that; if $\gamma=j\delta+\beta\in V_{\delta}(\rho^{\alpha_1})\backslash E_2$ then
\begin{equation}\label{single}
\lvert j \lvert < r_1 ,\;\;\; r_1=\rho^{\alpha_1}\lvert \delta\lvert^{-2}+1,\;\;\lvert\beta^k \lvert > \frac{1}{3}\rho^{\alpha_1},\;\;\; \forall k:e_k \ne \delta.
\end{equation}
We write the decomposition (3) of $v_{ij}(x)$ as
\begin{equation}\label{vij}
  v_{ij}(x)=\sum_{\gamma^{'}\in
\frac{\Gamma}{2}}v_{ij\gamma^{'}}u_{\gamma^{'}}(x)=p_{ij}(s)+\sum_{\gamma \in \frac{\Gamma}{2} \setminus \delta
 \boldsymbol{R}}v_{ij\gamma}u_{\gamma}(x)
\end{equation}
where
\begin{equation}\label{pij}
p_{ij}(s)=\sum_{n \in \boldsymbol{Z}}p_{ijn}\cos ns,\;\; p_{ijn}=v_{ij(n\delta)}, \;\;s= x\cdot\delta,\; i,j=1,2,\ldots,m.
\end{equation}
\par In order to obtain the asymptotic formulas for the single resonance eigenvalues  $\lvert \gamma \lvert^2$ $\left(\gamma \in V_{\delta}(\rho^{\alpha_1})\setminus E_2\right)$, we consider the operator $L(V)$ as the perturbation of $L(P(s))$ where $L(P(s))$ is defined by the differential expression
\begin{equation}
Lu=-\Delta u+ P(s) u
\end{equation}
and the Neumann boundary condition
\begin{equation*}
\frac{\partial u}{\partial n}|_{\partial F}=0,
\end{equation*}
\begin{equation}\label{p(s)}
 P(s)=\left(p_{ij}(s)\right),\; i,j=1,2,\ldots,m.
\end{equation}
\par It can be easily verified by the method of separation of variables that the eigenvalues and the corresponding eigenfunctions of $L(P(s))$, indexed by the pairs $(j,\beta) \in {\mathbb{Z}}\times \Gamma_{\delta}$,  are \\$\lambda_{j,\beta}=\lambda_j+\rvert \beta \rvert^2$\;\; and\;\; $\chi_{j,\beta}(x)=u_{\beta}(x) \cdot \varphi _{j}(s)=\left(u_{\beta(x)}\varphi_{j1}, u_{\beta(x)}\varphi_{j2}, \ldots, u_{\beta(x)}\varphi_{jm}\right)$, respectively, where $\beta\in \Gamma_{\delta}$,\; $\lambda_j$ is the eigenvalue and $\varphi_j(s)=\left(\varphi_{j,1}(s), \varphi_{j,2}(s), \ldots , \varphi_{j,m}(s) \right)$ is the corresponding eigenfunction of the operator $T(P(s))$ defined by the differential expression
\begin{equation}\label{ps}
T(P(s))Y=-\left\lvert\frac{\pi}{a_i}\right\lvert^2Y^{''}+P(s)Y
\end{equation}
and the boundary condition
\begin{equation}\label{y(0)}
Y^{'}(0)=Y^{'}(\pi)=0.
\end{equation}
\par The eigenvalues of the operator $T(0)$, defined by \eqref{ps} when $P(s)=0$\; and the boundary condition \eqref{y(0)},\; are\;  $\lvert n\delta \lvert^2=\lvert\frac{n \pi}{a_i}\lvert^2$ with the corresponding eigenspace $E_n=span\left\{C_{n,1}(s), C_{n,2}(s), \ldots , C_{n,m}(s)\right\} $, where $C_{n,i}(s)=(0, \ldots , \cos ns, \ldots , 0)$, $n\in {\mathbb{Z}^{+}\cup \{0\}}.$ It is well known that the eigenvalue $\lambda_j$ of $T(P(s))$ satisfying $\lvert\lambda_j- \lvert j \delta \lvert^2\lvert < \sup P(s)$, satisfies the following relation
\begin{equation}\label{eigenvalue}
\lambda_j=\lvert j \delta \lvert^2+O\left(\frac{1}{\lvert j \delta \lvert}\right).
\end{equation}
 \par By the above equation, the eigenvalue $\lvert \gamma \lvert^2=\lvert \beta \lvert^2+\lvert j \delta \lvert^2$ of $L(0)$ corresponds to the eigenvalue $\lvert \beta\lvert^2+\lambda_j$ of $L(P(s)).$
\par Note that, we denote the inner product in $L_2^    m(F)$ by $\langle\cdot,\cdot \rangle$ which is defined by using the inner product $(\cdot,\cdot)$ in $L_2(F)$ as follows:
\begin{align}\label{inner}
f(x)=(f_1(x),\ldots,f_m(x)),\;g(x)=(g_1(x),\ldots,g_m(x))\in L_2^m(F) \;\Rightarrow \;\langle f,g\rangle=(f_1,g_1)+\ldots+(f_m,g_m),
\end{align}
for $x\in \mathbb{R}^d$, $d\geq 1$. Also for any $f\in L_2^m[0,\pi]$, since $\{C_{n,i}\}_{n\in \mathbb{Z}^{+} \cup \{0\},\;i=1,2,\ldots,m}$ is a complete system, by \eqref{inner} we have the decomposition
\begin{align}\nonumber
f(s)=& \sum_{n\in \mathbb{Z}^{+} \cup \{0\}}\sum_{i=1}^{m}\frac{2}{\pi}\left\langle f (s), C_{n,i}(s)\right \rangle C_{n,i}(s)\\ =& \left( \sum_{n\in \mathbb{Z}^{+} \cup \{0\}} \frac{2}{\pi}\left( f_1(s), \cos ns\right ) \cos ns,\ldots, \sum_{n\in \mathbb{Z}^{+} \cup \{0\}} \frac{2}{\pi}\left( f_m(s), \cos ns\right ) \cos ns \right).
\end{align}
On the other hand, by equivalence of the decompositions \eqref{q1} and \eqref{q2} $\left( q(x)=q(s)\;\in L_2^m[0,\pi]\right.,$ $\left.  \text{when} \;\;d=1 \right)$, it is convenient to use the decomposition
$$f(s)=\sum_{n\in \mathbb{Z}}\sum_{i=1}^{m}\frac{1}{\pi}\left\langle f (s), C_{n,i}(s)\right \rangle C_{n,i}(s). $$
In the sequel, for the sake of simplicity, we use the brief notation $\left\langle f (s), C_{n,i}(s)\right \rangle$ instead of $\frac{1}{\pi}\left\langle f (s), C_{n,i}(s)\right \rangle$, since the constants which do not depend on $\rho$ are inessential in our calculations.
\par The system of eigenfunctions $\left\{\chi_{j,\beta}\right\}_{j,\beta}$ is complete in $L_2^m(F).$ Indeed; suppose that there exists a non-zero function $f(x)\in L_2^m(F)$ which is orthogonal to each $\chi_{j,\beta}$, $j\in \boldsymbol{Z}$, $\beta \in \Gamma_{\delta}.$ Since $C_{n,i}$\;, $i=1,2,\ldots,m$ can be decomposed by $\varphi_j,$ by \eqref{gamma}, and the definition of $\chi_{j,\beta},$ the function\; $\Phi_{i,\gamma}=u_{\beta}(x)\cdot C_{n,i}$\;, $i=1,2,\ldots,m$ can be decomposed by the system  $\left\{\chi_{j,\beta}\right\}_{j\in \boldsymbol{Z},\beta \in \Gamma_{\delta}}$. Thus, the assumption $\left \langle\chi_{j,\beta}(x)\;,\;f(x)\right\rangle=0$ for $j\in \boldsymbol{Z},\;\beta \in \Gamma_{\delta}$
implies that $\left\langle f(x)\;,\;\phi_{i,\gamma} \right\rangle=0$, $\forall \gamma\in \frac{\Gamma}{2}$ and $i=1,2,\ldots,m,$ which contradicts to the fact that $\left\{\Phi_{i,\gamma}(x)\right\}_{\gamma\in \frac{\Gamma}{2}\;,\;i=1,\ldots,m}$ is a basis for $L_2^m(F).$
 \par To prove the asymptotic formulas,
 we use the binding formula
 \begin{equation}\label{binding}
 \left(\Lambda_n -\lambda_{j,\beta} \right)\left\langle \psi_N \;,\; \chi_{j,\beta}\right\rangle=\left\langle \psi_N, \left(V-P\right) \chi_{j ,\beta}\right\rangle,
 \end{equation}
 for the eigenvalue, eigenfunction pairs $\Lambda_{N}$, $\Psi_{N}(x)$
 and $\lambda_{j,\beta},\chi_{j,\beta}$ of the operators
 $L(V)$ and $L(P(s))$, respectively. The formula \eqref{binding} can be
 obtained  by multiplying the equation
 \\$L(V)\Psi_{N}(x)=\Lambda_{N}\Psi_{N}(x)$ by $\chi_{j,\beta}$ and
 using the facts that $L(P(s))$ is self-adjoint and \\
 $L(P(s))\chi_{j,\beta}=\lambda_{j,\beta}\; \chi_{j,\beta}$.
 \par Now our aim is to decompose $\left( V-P \right) \chi_{j,\beta}$ with respect to the basis $\left\{\chi_{j^{'},\beta^{'}}\right\}_{j^{'}\in\boldsymbol{Z}, \beta^{'}\in \Gamma_{\delta}}$.
 \par By \eqref{vij} and \eqref{decomp}, we have
 \begin{equation}\label{vij-pij}
 v_{ij}(x)-p_{ij}(s)=\sum_{(\beta_1,n_1)\in \Gamma^{'}(\rho^{\alpha})}d_{ij}(\beta_1,n_1)\cos n_1 s\; u_{\beta_1}(x)+O(\rho^{-p\alpha}),
 \end{equation}
 where $$\Gamma^{'}(\rho^{\alpha})=\left\{(\beta_1,n_1):\beta_1 \in \Gamma_{\delta} \backslash \{0\},n_1\in \boldsymbol{Z},n_1\delta+\beta_1\in \Gamma(\rho^{\alpha})\right\}$$ and \\$$d_{ij}(\beta_1,n_1)=\frac{1}{\mu(F)}\int \limits_{F}v_{ij}(x)\cos n_1s\; u_{\beta_1}(x)dx.$$
  For $(\beta_1,n_1)\in \Gamma^{'}(p\rho^{\alpha}),$ we have $\lvert n_1\delta+\beta_1\lvert<p\rho^{\alpha}$ and since $\beta_1$ is orthogonal to $\delta$,
  \begin{equation}\label{single2}
  \lvert \beta_1
    \lvert<p\rho^{\alpha},\;\;\lvert n_1 \lvert<p\rho^{\alpha}\;\;\lvert n_1 \lvert<\frac{1}{2}r_1,
  \end{equation}
  (see \eqref{single})
  \par Clearly (see equation (22) in \cite{Karakilic2}), we have, for all $i,j=1,2,\ldots,m,$
 \begin{equation}\label{sum}
 \sum_{\left( \beta_1,n_1 \right)\in \Gamma^{'}(\rho^{\alpha})}d_{ij}\left(  \beta_1,n_1 \right)\left( \cos n_1s  \right)u_{\beta_1}(x)u_{\beta}(x)=\sum_{\left( \beta_1,n_1 \right)\in \Gamma^{'}(\rho^{\alpha})}d_{ij}\left(  \beta_1,n_1 \right)\left( \cos n_1s  \right)u_{\beta_1+\beta}(x),
 \end{equation}
 for all \;$\beta\in \Gamma_{\delta}$ satisfying $\left\lvert\beta^k \right\lvert >\frac{1}{3}\rho^{\alpha_1}$, $\forall k:e_k \neq \delta.$
\par By using the definition of $\chi_{j,\beta}$, $P(s)$, the decompositions \eqref{vij-pij} and \eqref{sum}, we have
\begin{align}\label{1decom}
&\left( V-P \right) \chi_{j,\beta}=\nonumber \\ \nonumber &\sum_{\left( \beta_1,n_1 \right)\in \Gamma^{'}(\rho^{\alpha})}\sum_{k=1}^{m}\left( d_{1k}\left(  \beta_1,n_1 \right)\left( \cos n_1s  \right)\varphi_{j,k}(s)u_{\beta+\beta_{1}},\ldots,d_{mk}\left(  \beta_1,n_1 \right)\left( \cos n_1s  \right)\varphi_{j,k}(s)u_{\beta+\beta_{1}} \right)\\ &\quad \quad \quad \quad \quad\quad\quad\quad\quad
\quad\quad \quad \quad \quad \quad\quad\quad\quad\quad
\quad\quad \quad \quad \quad \quad\quad\quad\quad\quad\quad\quad\quad
\quad
+O\left( \rho^{-p\alpha} \right).
\end{align}
\hspace{.35cm}Now we consider the following decompositions:
\begin{equation}\label{cos}
\varphi_{j,k}(s)=\sum_{n\in \boldsymbol{Z}}\left(\varphi_{j,k}, \cos ns\right)\; \cos ns,\end{equation}
\begin{eqnarray}\label{cosn1s}
\cos n_1 s\; \varphi _{j,k}(s)&=& \sum_{n\in \boldsymbol{Z}}\left(\varphi_{j,k}, \cos ns\right). \;\cos n_1s. \cos ns\nonumber\\
&=&\sum_{n\in \boldsymbol{Z}}\left(\varphi_{j,k}, \cos ns\right).\frac{1}{2}[\cos(n_1+n)s+\cos(n_1-n)s]\nonumber\\
&=&\sum_{n\in \boldsymbol{Z}}\left(\varphi_{j,k}, \cos ns\right).\cos(n_1+n)s,
\end{eqnarray}
for each $j\in Z$, $k=1,2,\ldots, m.$
\par On the other hand; the decomposition of $\varphi_{j}(s)=\left(\varphi_{j,1}(s),\ldots,\varphi _{j,m}(s)\right)$ with respect to the basis $\left\{C_{n,i}(s)=\left(0,0,\ldots,\cos ns,0,\ldots,0 \right)\right\}_{n\in \boldsymbol{Z}, i=1,2,\ldots,m}$ is given by
\begin{eqnarray}\label{phi}
\varphi_{j}(s)&=& (\varphi_{j,1}, \varphi_{j,2},\ldots, \varphi_{j,m})\nonumber \\ &=& \sum_{n\in \boldsymbol{Z}}\sum_{i=1}^{m}\langle\;\varphi_{j}(s),C_{n,i}(s)\;\rangle C_{n,i}(s) \nonumber\\
&=&\left(\sum_{n\in \boldsymbol{Z}}\langle\;\varphi_{j}(s),C_{n,1}(s)\;\rangle \cos ns,\ldots, \sum_{n\in \boldsymbol{Z}}\langle\;\varphi_{j}(s),C_{n,m}(s)\;\rangle \cos ns\right).
\end{eqnarray}
Thus, \eqref{cos}, \eqref{cosn1s} and \eqref{phi}, gives
\begin{eqnarray}\label{cos1}
& \varphi _{j,k}(s)=\sum\limits_{n\in \boldsymbol{Z}}\langle\;\varphi_{j}(s),C_{n,k}(s)\;\rangle\;\cos ns\\ \nonumber& \cos n_1 s\; \varphi _{j,k}(s)=\sum\limits_{n\in \boldsymbol{Z}}\langle\;\varphi_{j}(s),C_{n,k}(s)\;\rangle \;\cos(n+n_1)s.
\end{eqnarray}

\begin{lemma}
Let $r$ be a number no less than $r_1$ $(r\geq r_1)$ and $j,n$ be integers satisfying $\lvert j \lvert +1 < r$, $\lvert n\lvert\geq 2r$. Then
\begin{equation}\label{lemma1.1}
\langle\;\varphi_j(s)\;,\;C_{n,i}(s)\;\rangle=O\left( \rho^{-(l-1)\alpha} \right)\;, \forall i=1,2,\ldots,m\end{equation}
and
\begin{equation}\label{lemma1.2}
\varphi_{j}(s)=\sum_{\lvert n\lvert<2r}\sum_{i=1}^{m}\left\langle\;\varphi_j(s)\;,\;C_{n,i}(s)\;\right\rangle \;C_{n,i}(s)+O\left(\rho^{-(l-2)\alpha}\right).
\end{equation}
\end{lemma}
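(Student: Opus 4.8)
I would derive a recursion for the Fourier coefficients of $\varphi_j$ and iterate it a bounded number of times. Write the $i$-th component of $T(P(s))\varphi_j=\lambda_j\varphi_j$ as $-|\delta|^2\varphi_{j,i}''+\sum_{k=1}^{m}p_{ik}(s)\varphi_{j,k}=\lambda_j\varphi_{j,i}$, take the $L_2[0,\pi]$ inner product with $\cos ns$, integrate by parts twice (the boundary terms vanish by \eqref{y(0)} and because $n\in\boldsymbol{Z}$), and expand $p_{ik}(s)\varphi_{j,k}(s)$ by \eqref{cos1} (as in equation (22) of \cite{Karakilic2}). Setting $b_n^{(i)}:=\langle\varphi_j(s),C_{n,i}(s)\rangle$ this gives
\[
\bigl(\lambda_j-|n\delta|^2\bigr)\,b_n^{(i)}=\sum_{k=1}^{m}\ \sum_{n_1\in\boldsymbol{Z}}p_{ikn_1}\,b_{n-n_1}^{(k)},\qquad i=1,\dots,m,\ n\in\boldsymbol{Z}.
\]
Since $\varphi_j$ is normalized, $|b_n^{(i)}|\le 1$ for all $n,i$, and $\sum_{n_1}|p_{ikn_1}|\le M_{ik}<\infty$ by \eqref{decompsum}.

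The denominator is controlled as follows. By \eqref{eigenvalue}, $\lambda_j\le|j\delta|^2+c$, and $|j|+1<r$; a short estimate then shows $|\lambda_j-|n\delta|^2|\ge c'\,r\ge c'\rho^{\alpha_1}$ for $|n|\ge r$, and the stronger bound $|\lambda_j-|n\delta|^2|\ge c'|n|^2\ge c'\rho^{2\alpha_1}$ for $|n|\ge 2r$, for $\rho$ large. Now fix $|n|\ge 2r$ and substitute the recursion into itself repeatedly, each time splitting the inner sum into a \emph{large-jump} part $|n_1|\ge\rho^{\alpha}$ and a \emph{small-jump} part $|n_1|<\rho^{\alpha}$, stopping a branch whenever a large jump occurs and estimating the remaining coefficient by $1$. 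For a large jump, Cauchy--Schwarz applied to \eqref{coeff} gives $\sum_{|n_1|\ge\rho^{\alpha}}|p_{ikn_1}|\le c_l\,\rho^{-(l-\frac12)\alpha}$, so, together with the denominators already collected (each $\ge c'\rho^{\alpha_1}$), every branch containing a large jump contributes $O(\rho^{-(l-\frac12)\alpha})=O(\rho^{-(l-1)\alpha})$. After $t$ consecutive small jumps the running index has moved by at most $t\rho^{\alpha}$; since $r\ge r_1\ge\rho^{\alpha_1}|\delta|^{-2}$ and $\alpha_1=3\alpha$, for a fixed $t$ and $\rho$ large we have $t\rho^{\alpha}\le r$, so all intermediate indices stay in $\{|n|\ge r\}$ and each of the $t$ denominators is $\ge c'\rho^{\alpha_1}$. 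Choosing $t$ with $t\alpha_1\ge(l-1)\alpha$ (i.e. $t=\lceil(l-1)/3\rceil$), the single ``all-small-jump'' branch is bounded by $(mM)^t(c'\rho^{\alpha_1})^{-t}=O(\rho^{-(l-1)\alpha})$, where $M=\max_{i,k}M_{ik}$, the iterated sums over $n_1,n_2,\dots$ converging by \eqref{decompsum} and $|b|\le1$ closing the chain. Since the number of branch types is bounded independently of $\rho$, summing yields $b_n^{(i)}=O(\rho^{-(l-1)\alpha})$, which is \eqref{lemma1.1}.

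For \eqref{lemma1.2}, completeness of $\{C_{n,i}\}$ gives $\varphi_j=\sum_{n\in\boldsymbol{Z}}\sum_{i=1}^{m}b_n^{(i)}C_{n,i}$, so the norm of $\varphi_j-\sum_{|n|<2r}\sum_i b_n^{(i)}C_{n,i}$ equals $\bigl(\sum_{|n|\ge2r}\sum_i|b_n^{(i)}|^2\bigr)^{1/2}$ up to a constant. Running the same iteration but keeping the first denominator explicit (it is $\ge c'|n|^2$) upgrades \eqref{lemma1.1} to $b_n^{(i)}=O(|n|^{-2}\rho^{-(l-2)\alpha})$ for $|n|\ge2r$; hence $\sum_{|n|\ge2r}\sum_i|b_n^{(i)}|^2\le c\,\rho^{-2(l-2)\alpha}\sum_{|n|\ge2r}|n|^{-4}=O(\rho^{-2(l-2)\alpha})$, which is \eqref{lemma1.2}.

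The main obstacle is the bookkeeping in the iteration: one has to guarantee that a chain of small jumps cannot escape the region $\{|n|\ge r\}$ — where the denominators are large — before $t$ of them have been accumulated, and this is precisely where the separation $r\gtrsim\rho^{\alpha_1}\gg\rho^{\alpha}$ between the width of the single resonance layer and the jump threshold enters, while simultaneously ensuring that every chain that does contain a large jump is killed by the smoothness hypothesis \eqref{coeff}. Deriving the recursion with the matrix coefficients $p_{ikn_1}$ and checking the vanishing of the boundary terms in the integration by parts are routine.
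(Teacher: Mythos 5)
Your proposal is correct and follows essentially the paper's own route: your recursion for $b_n^{(i)}$ is exactly the binding formula \eqref{binding1} combined with the expansion \eqref{2} of $P(s)C_{n,k}$, and, as in the paper, you iterate it a fixed number of times, using the smoothness condition \eqref{coeff} to bound the contribution of large Fourier jumps of $P$ and the lower bound on $\lvert\lambda_j-\lvert n\delta\lvert^2\lvert$ coming from \eqref{eigenvalue} to gain a power of $\rho$ at each step. The differences are only bookkeeping: you cut the jumps at $\rho^{\alpha}$ and gain $\rho^{-\alpha_1}$ per step with $\lceil (l-1)/3\rceil$ iterations, where the paper cuts at $\lvert n\delta\lvert/2l$ and gains $\lvert n\delta\lvert^{-2}$ per step with $[l/2]$ iterations, and for \eqref{lemma1.2} you sum the tail by Parseval using the extra $\lvert n\rvert^{-2}$ decay, which is, if anything, a slightly more careful treatment of the infinite tail than the paper's one-line deduction from \eqref{lemma1.1}.
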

\begin{proof}
We use the following binding formula for $T(0)$ and $T(P(s))$
\begin{equation}\label{binding1}
\left( \lambda_j- \lvert n\delta \lvert ^2 \right)\left\langle\varphi_{j}(s), C_{n,k}(s) \right\rangle=\langle\varphi_{j}(s), P(s)C_{n,k}\rangle
\end{equation}
and the obvious decomposition, which can be obtained by definition of $P(s)$ and \eqref{decomp},
 \begin{align}\nonumber
P(s)C_{n,k}(s)&=\left(\sum_{\lvert n_1 \delta \lvert <\frac{\lvert n\delta\lvert}{2l}}p_{1kn_1}\cos n_1s \cos ns,\ldots, \sum_{\lvert n_1 \delta \lvert <\frac{\lvert n\delta \lvert}{2l}}p_{mkn_1}\cos n_1s \cos ns \right)+O\left(\lvert n\delta \lvert ^{-(l-1)}\right)\\ \nonumber
&=\left(\sum_{\lvert n_1 \delta \lvert <\frac{\lvert n\delta \lvert}{2l}}p_{1kn_1}\cos (n-n_1)s,\ldots, \sum_{\lvert n_1 \delta \lvert <\frac{\lvert n\delta \lvert}{2l}}p_{mkn_1}\cos (n-n_1)s\right)+O\left(\lvert n\delta \lvert ^{-(l-1)}\right)\\ \label{2}
&=\sum_{t=1}^{m}\sum_{\lvert n_1 \delta \lvert <\frac{\lvert n\delta \lvert}{2l}}p_{tkn_1}C_{n-n_1,k}(s)+O\left(\lvert n\delta \lvert ^{-(l-1)}\right).
\end{align}
Putting above equation \eqref{2} into \eqref{binding1}, we get
\begin{eqnarray}\nonumber
\left( \lambda_j- \lvert n\delta \lvert ^2 \right)\langle\varphi_{j}(s), C_{n,k}(s) \rangle&=&\langle\varphi_{j}(s), \sum_{t_1=1}^{m}\sum_{\lvert n_1 \delta \lvert <\frac{\lvert n\delta \lvert}{2l}}p_{t_1kn_1} C_{n-n_1,k}\rangle+O\left(\lvert n\delta \lvert ^{-(l-1)}\right)\\ \label{**}
&=&\sum_{t_1=1}^{m}\sum_{\lvert n_1 \delta \lvert <\frac{\lvert n\delta \lvert}{2l}}p_{t_1kn_1}\langle\varphi_{j}(s), C_{n-n_1,k}(s) \rangle +O\left(\lvert n\delta \lvert ^{-(l-1)}\right)
\end{eqnarray}
By assumption $\lvert n \lvert\geq 2r$ and $\lvert j \lvert +1<r$, thus if $\lvert n_1 \delta \lvert<\frac{\lvert n \delta\lvert}{2l}$ then $\lvert\lvert(n-n_1)\delta\lvert^2-\lvert j\lvert\lvert>\frac{\lvert n \lvert}{5}$ which together with \eqref{eigenvalue} imply $\lvert \lambda_j-\lvert(n-n_1)\delta\lvert^2\lvert>c\lvert n\delta \lvert$. So that in \eqref{binding1} if we substitute $(n-n_1)\delta$ instead of $n\delta$, we get
\begin{equation}\label{1*}
 \langle\varphi_j(s),C_{n-n_1,k}(s)\rangle=\frac{\langle\varphi_j(s),P(s)C_{n-n_1,k}\rangle} {\lambda_j-\lvert (n-n_1)\delta\lvert^2}
\end{equation}
Now using \eqref{1*} in \eqref{**}, we get
\begin{equation*}\label{it}
\left( \lambda_j- \lvert n\delta \lvert ^2 \right)\langle\varphi_{j}(s), C_{n,k}(s) \rangle=\sum_{t_1=1}^{m}\sum_{\lvert n_1\delta \lvert <\frac{\lvert n\delta \lvert}{2l}}\frac{p_{t_1kn_1}\langle\varphi_{j}(s), P(s) \;C_{n-n_1,k}(s) \rangle}{ \left( \lambda_j- \lvert (n-n_1)\delta \lvert ^2 \right)  } +O\left(\lvert n\delta \lvert ^{-(l-1)}\right).
\end{equation*}
Again putting \eqref{2} into the last equation, we obtain
\begin{align}
&\left(\lambda_j- \lvert n\delta \lvert ^2 \right)\langle\varphi_{j}(s), C_{n,k}(s) \rangle\nonumber \\ &=\sum_{t_1=1}^{m}\sum_{\lvert n_1 \delta \lvert <\frac{\lvert n\delta \lvert}{2l}}\frac{p_{t_1kn_1}\langle\varphi_{j}(s),\sum_{t_2=1}^{m} \sum_{\lvert n_2 \delta \lvert <\frac{\lvert n\delta \lvert}{2l}}p_{t_2kn_2} \;C_{n-n_1-n_2,k}(s) \rangle}{ \left( \lambda_j- \lvert (n-n_1)\delta \lvert ^2 \right)  } +O\left(\lvert n\delta \lvert ^{-(l-1)}\right)\nonumber\\
&=\sum_{t_1,t_2=1}^{m}\sum_{\lvert n_1 \delta \lvert< \frac{\lvert n\delta \lvert}{2l} \atop \lvert n_2 \delta \lvert< \frac{\lvert n\delta\lvert}{2l} }\frac{p_{t_1kn_1}p_{t_2kn_2}\langle\varphi_{j}(s), C_{n-n_1-n_2,k}(s) \rangle +O\left(\lvert n\delta \lvert ^{-(l-1)}\right)}{ \left( \lambda_j- \lvert (n-n_1)\delta \lvert ^2 \right)}.
\end{align}

In this way, iterating $p_1=[\frac{l}{2}]$ times and dividing both sides of the obtained equation by $\lambda_j- \lvert n\delta \lvert ^2$, we have
\begin{equation}\label{afteriteration}
\langle \varphi_j(s)C_{n,k}(s)\rangle=\sum_{t_1,t_2,\ldots,t_{p_1}=1}^m \sum_{\substack{\lvert n_1\delta \lvert<\frac{\lvert n\delta \lvert}{2l}\\\lvert n_2\delta \lvert<\frac{\lvert n\delta \lvert}{2l}\\ \vdots\\\lvert n_{p_1}\delta \lvert<\frac{\lvert n\delta \lvert}{2l}}}\frac{p_{t_1kn_1}p_{t_2k n_2}\ldots p_{t_{p_1}kn_{p_1}}\langle\varphi_{j}, C_{n-n_1-\ldots-n_{p_1},k} \rangle}{\Pi_{s=0}^{p_1-1} \left(\lambda_{j}- \lvert (n-n_1-\ldots-n_s)\delta \lvert ^2\right)}+O(\lvert n \delta \lvert^{-(l-1)})
\end{equation}
where the integers $n,n_1,\ldots, n_{p_1}$ satisfy the conditions
$$\lvert n_s \lvert<\frac{\lvert n \lvert}{2l},\;\;\; s=1,\ldots, p_1,\;\;\; \lvert j \lvert+1<\frac{\lvert n\lvert}{2}.  $$
These conditions and the assumptions $\lvert n \lvert>2r$, $\lvert j \lvert+1<r$ imply that
$$ \lvert \lvert n-n_1-\ldots-n_s\lvert-\lvert j \lvert>\frac{\lvert n \lvert}{5},\;\;s=0,1,2,\ldots,p_1.$$ This together with \eqref{eigenvalue}, give
\begin{equation}\label{condition}
\frac{1}{\lvert \lambda_j-\lvert(n-n_1-\ldots-n_s)\delta \lvert^2}=\frac{1}{\left \lvert \lvert j\delta \lvert^2+O\left(\frac{1}{\lvert j\delta \lvert}\right)- \lvert\left(n-n_1-\ldots -n_s\right)\delta\lvert^2\right\lvert}=O\left(\lvert n\delta \lvert^{-2}\right)
\end{equation}
for $s=0,\ldots,{p_1}-1.$
Hence by \eqref{afteriteration}, \eqref{condition} and \eqref{decompsum}, we have
$$  \langle \varphi_j(s),C_{n,k}(s)\rangle=O\left(\lvert n\delta \lvert^{-(l-1)}\right).$$
Since $\lvert n\delta \lvert\geq2r\geq r_1>2\rho^{\alpha}$, $O\left(\lvert n\delta \lvert^{-(l-1)}\right)=O(\rho^{-(l-1)\alpha})$ from which we get the proof of \eqref{lemma1.1}. \\

To prove \eqref{lemma1.2}, we write the Fourier series of $\varphi_j(s)$ with respect to the basis $\{C_{n,1}(s),\ldots,C_{n,m}(s)\}_{n\in \mathbb{Z}}$ as follows:
\begin{align*}
\varphi_j(s)&=\sum_{n\in \boldsymbol{Z}}\langle\varphi_j(s), C_{n,k}(s) \rangle  C_{n,k}(s)\\&=\sum_{\lvert n \lvert<2r}\langle\varphi_j(s), C_{n,k}(s) \rangle  C_{n,k}(s)+\sum_{\lvert n \lvert\geqslant 2r}\langle\varphi_j(s), C_{n,k}(s) \rangle  C_{n,k}(s),
\end{align*}
From which together with \eqref{lemma1.1}, we get \eqref{lemma1.2}.
\end{proof}
Using the first relation \eqref{lemma1.1} in Lemma $1$  and \eqref{cos1}, we also have
\begin{equation}\label{lemma1}
\cos n_1 s\; \varphi_{j,k}(s)=\sum_{\lvert n\lvert<2r}\langle\;\varphi_j(s)\;,\;C_{n,k}(s)\;\rangle\; \cos(n+n_1)s+O\left(\rho^{-(l-2)\alpha}\right).
\end{equation}
Putting this last relation \eqref{lemma1} into \eqref{1decom}, we get
\begin{align}\label{decom2}
\nonumber&(V-P)\chi_{j , \beta}=\\ \nonumber &\sum_{\left( \beta_1,n_1 \right)\in \Gamma^{'}(\rho^{\alpha})}\sum_{\lvert n\lvert<2r}\sum_{k=1}^{m}\left( d_{1k}\left(  \beta_1,n_1 \right)\langle\;\varphi_j(s)\;,\;C_{n,k}(s)\;\rangle\; \cos(n+n_1)su_{\beta+\beta_{1}},\ldots,\right.\nonumber\\
&\left. d_{mk}\left(  \beta_1,n_1 \right)\langle\;\varphi_j(s)\;,\;C_{n,k}(s)\;\rangle\; \cos(n+n_1)su_{\beta+\beta_{1}} \right)+O\left( \rho^{-p\alpha} \right).
\end{align}
$\left(\text{ note that } p=(l-d), d\geq 2 \Rightarrow \frac{1}{\rho(l-2)}<\frac{1}{\rho^{p\alpha}}.\text{ Hence  }  O\left( \rho^{-p\alpha} \right)+O\left(\rho^{-(l-2)\alpha}\right)=O\left( \rho^{-p\alpha}\right).\right)$
\par Now, in order to decompose $(V-P)\chi_{j , \beta}$ with respect to $\left\{\chi_{j+j_1^{'} , \beta_1^{'}}\right\}$ we consider the inner product $\langle (V-P)\chi_{j , \beta},\chi_{j+j_1^{'} , \beta_1^{'}}\rangle$, that is, by the definition of $\chi_{j+j_1^{'} , \beta_1^{'}}$ and \eqref{decom2}, the inner products \\$( \cos(n+n_1)s\;u_{\beta+\beta_1} \;,\;\varphi_{j+j_1^{'},t}(s)\;u_{\beta_1^{'}})$, $t=1,2,\ldots, m$. Using the decomposition \eqref{cos1}, instead of $j$, we substitute $j+j_1^{'}$ to get
\begin{eqnarray*}
\left( \cos(n+n_1)s\;u_{\beta+\beta_1} \;,\;\varphi_{j+j_1^{'},t}(s)\;u_{\beta_1^{'}}\right) &=&\left(\cos(n+n_1)s\;u_{\beta+\beta_1}\;,\; \sum_{n^{'}\in \boldsymbol{Z}}\langle\;\varphi_{j+j_1^{'}}(s)\;,\;C_{n^{'},t}(s)\;\rangle\;\cos n^{'}s\;u_{\beta_1^{'}}\right) \\
&=& \sum_{n^{'}\in \boldsymbol{Z}} \langle\;\overline{\varphi_{j+j_1^{'}}(s)\;,\;C_{n^{'},t}(s)}\;\rangle\left(\cos(n+n_1)s\;u_{\beta+\beta_1},\cos n^{'}s\;u_{\beta_1^{'}}\right)   .
\end{eqnarray*}
Note that if $\beta_1^{'}\neq \beta+\beta_1$ or $n^{'}\neq n+n_1$ then $( \cos(n+n_1)s\;u_{\beta+\beta_1} \;,\;\cos n^{'}s\;u_{\beta_1^{'}})=0$. Thus,
\begin{displaymath}
        \left(  \cos(n+n_1)s\;u_{\beta+\beta_1} \;,\;\varphi_{j+j_1^{'},t}(s)\;u_{\beta_1^{'}} \right)=\left\{ \begin{array}{ll}0 & \textrm{,\;\;if \;\;$\beta_1^{'}\neq\beta+\beta_1$\;\; or\;\; $n{'}\neq n+n_1$} \\
        \langle\;\overline{\varphi_{j+j_1^{'}}(s)\;,\;C_{n+n_1,t}(s)}\;\rangle & \textrm{,\;\;otherwise.}
        \end{array}\right.
        \end{displaymath}
Using the last equality and \eqref{decom2}, we get
\begin{multline}\label{decom3}
V-P)\chi_{j , \beta}=\sum_{j_1^{'}\in \boldsymbol{Z} \atop \left( \beta_1,n_1 \right)\in \Gamma^{'}(\rho^{\alpha})}\left(\sum_{\lvert n\lvert<2r}\sum_{k=1}^{m}\sum_{i=1}^{m}d_{ik}\left(n_1,\beta_1\right)\langle \varphi_{j},C_{n,k} \rangle \langle \overline{\varphi_{j+j_1^{'}},C_{n+n_1,i}}  \rangle \right)\chi_{j+j_1^{'},\beta+\beta_1}\\+O(\rho^{-p\alpha}).
\end{multline}
\begin{lemma}
Let $r$ be a number no less than $r_1$ $(r\geq r_1)$, $j,n$ and $n_1$be integers satisfying  $\lvert n\lvert<2r$, $\lvert n_1 \lvert<\frac{1}{2}r_1$ and $\lvert j \lvert+1<r,$ then
\begin{equation*}
\sum_{j_1\in \boldsymbol{Z} \atop \lvert j_1 \lvert \geqslant 6r} \langle \varphi_{j+j_1},C_{n,i} \rangle=O\left(\rho^{-(l-2)\alpha}\right), \forall i=1,2,\ldots,m.
\end{equation*}
\end{lemma}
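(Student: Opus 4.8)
The plan is to reduce the statement to a pointwise decay bound for each summand and then sum a convergent tail. Fix $j_1$ with $|j_1|\geq 6r$ and set $j'=j+j_1$. Since $|j|<r-1<r\leq\frac{1}{6}|j_1|$, we have $\frac{5}{6}|j_1|\leq|j'|\leq\frac{7}{6}|j_1|$; in particular $|j'|>5r$, and $|j'|\to\infty$ with $\rho$ because $r\geq r_1>\rho^{\alpha_1}|\delta|^{-2}$. Thus for this lemma $j'$ takes over the role played by $n$ in Lemma~1: it is the large index, while $n$ (with $|n|<2r$) is small.

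The core step is to prove
$$\langle\varphi_{j'}(s),C_{n,i}(s)\rangle=O\left(|j'\delta|^{-(l-1)}\right),\qquad i=1,2,\ldots,m.$$
For this I would run the iteration of Lemma~1 with $j$ replaced by $j'$ throughout, the only change being that in the decomposition \eqref{2} of $P(s)C_{\cdot,k}(s)$ the truncation is taken at $|n_1\delta|<|j'\delta|/(2l)$, which makes the remainder $O(|j'\delta|^{-(l-1)})$. Starting from \eqref{binding1} with index $j'$ and inserting \eqref{1*} (again with $j'$) exactly $p_1=[\frac{l}{2}]$ times, one reaches the analogue of \eqref{afteriteration}. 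Along the iteration every shifted mode satisfies $|n-n_1-\cdots-n_s|<2r+p_1\,|j'|/(2l)\leq 2r+|j'|/4<\frac{3}{4}|j'|$, using $|j'|>5r$; hence by \eqref{eigenvalue} each denominator obeys $|\lambda_{j'}-|(n-n_1-\cdots-n_s)\delta|^2|\geq c\,|j'\delta|^2$ once $\rho$ is large. Estimating the numerators by \eqref{decompsum} and the remaining inner products by a constant, the iterated term is $O(|j'\delta|^{-2p_1})=O(|j'\delta|^{-(l-1)})$, which with the remainder of the same order yields the displayed bound.

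It remains to sum over $j_1$. Using $|j'|\geq\frac{5}{6}|j_1|$,
$$\sum_{|j_1|\geq 6r}\left|\langle\varphi_{j+j_1},C_{n,i}\rangle\right|\leq C\sum_{|j_1|\geq 6r}|j_1\delta|^{-(l-1)}\leq C'\,|\delta|^{-(l-1)}\,r^{-(l-2)},$$
since $\sum_{k\geq 6r}k^{-(l-1)}\leq C''(6r)^{-(l-2)}$ for $l-1>1$. As $r\geq r_1>\rho^{\alpha_1}|\delta|^{-2}$ and $\alpha_1=3\alpha>\alpha$, we get $r^{-(l-2)}<|\delta|^{2(l-2)}\rho^{-\alpha_1(l-2)}\leq|\delta|^{2(l-2)}\rho^{-(l-2)\alpha}$; since every power of $|\delta|$ is a $\rho$-independent constant, the sum is $O(\rho^{-(l-2)\alpha})$, as claimed. (The hypothesis $|n_1|<\frac{1}{2}r_1$ is needed only to cover the variant where $C_{n+n_1,i}$ replaces $C_{n,i}$, since then one still has $|n+n_1|<\frac{5}{2}r<|j'|$.) I expect the only real obstacle to be the bookkeeping in the core step — controlling all $p_1$ denominators uniformly in the summation indices so that the accumulated exponent $2p_1\geq l-1$ genuinely dominates both the remainders and, after the final summation over $j_1$, the target $\rho^{-(l-2)\alpha}$; everything else is a transcription of Lemma~1 with $j'$ as the large parameter.
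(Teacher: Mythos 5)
Your proposal is correct and follows essentially the same route as the paper: the paper also proves the lemma by iterating the binding formula \eqref{binding1} $p_1=[\frac{l}{2}]$ times with the truncated expansion \eqref{2} of $P(s)C_{n+n_1,k}$, using that $|j+j_1|\geq 5r$ while the shifted cosine indices stay below $3r$, so all denominators $\lambda_{j+j_1}-|(n+n_1-l_1-\cdots-l_s)\delta|^2$ are large, and then invoking \eqref{decompsum}. The only difference is bookkeeping: the paper keeps the sum over $j_1$ and bounds each denominator uniformly by $O(r^{-2})$, whereas you extract a per-term decay $O\left(|(j+j_1)\delta|^{-(l-1)}\right)$ and then sum the tail over $j_1$ — both give the stated $O(\rho^{-(l-2)\alpha})$.
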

\begin{proof}
By the binding formula \eqref{binding1} for $T(0)$ and $T(P(s))$ we have
\begin{equation}\label{*}
\left( \lambda_{j+j_1}- \lvert (n+n_1)\delta \lvert ^2 \right)\langle\varphi_{j+j_1}, C_{n+n_1,k} \rangle=\langle\varphi_{j+j_1}, P(s)C_{n+n_1,k}\rangle.
\end{equation}
If $\lvert j_1\lvert\geq 6r$ then the assumptions of this lemma imply  $\lvert \lvert j+j_1 \lvert-\lvert n+n_1 \lvert \lvert>\frac{r}{2}$. Thus, using \eqref{*} and the fact that $\lambda_{j+j_1}=\lvert (j+j_1)\delta \lvert^2+O\left(\frac{1}{\lvert (j+j_1)\delta \lvert}\right)$, we get

 $$\lvert \sum_{j_1  \geqslant 6r} \langle\varphi_{j+j_1}, C_{n+n_1,k} \rangle \lvert=\lvert  \sum_{j_1\geqslant 6r}\frac{\langle\varphi_{j+j_1}, P(s)C_{n+n_1,k} \rangle}{\lambda_{j+j_1}- \lvert (n+n_1)\delta \lvert ^2}\lvert.$$
\par Using the decomposition of $p_{tk}(s)=\left(\sum_{\lvert l_1\delta \lvert<\lvert r\delta \lvert}v_{tk, l_1\delta}\cos l_1s\right)+O(\lvert r \delta \lvert^{-(l-1)})$ and iterating the obtained formula $p_1=[\frac{l}{2}]$ times as in the proof of Lemma 1, we get
\begin{equation}\label{6r}
\lvert \sum_{\lvert j_1 \lvert \geqslant 6r} \langle\varphi_{j+j_1}, C_{n+n_1,k} \rangle \lvert=\lvert \sum_{j_1  \geqslant 6r}\sum_{\substack{\lvert l_1 \delta \lvert< \lvert r\delta \lvert\\ \lvert l_2 \delta \lvert< \lvert r\delta\lvert\\\vdots\\\lvert l_p \delta \lvert< \lvert r\delta \lvert}  }\sum _{t_1,t_2,\ldots,
t_p=1}^{m}\frac{v_{t_1k,l_1\delta}\ldots v_{t_pk,l_p\delta}\langle \varphi_{j^{'}},C_{n+n_1-l_1-\ldots-l_k}\rangle}{\prod _{s=0}^{p-1}\lvert \lambda_{j+j_1}- (n+n_1-l_1-\ldots-l_s)\delta \lvert^2}
\end{equation}
 Since $\lvert n \lvert< 2r$ and $\lvert n_1 \lvert <\frac{1}{2}r_1<\frac{1}{2}r$,\; $\lvert n+n_1 \lvert<\frac{5r}{2}$. Also, \\
 $\lvert  n+n_1-l_1-\ldots -l_s \lvert<3r$\;\; \;\;\;and \; \;\;\;$ \frac{1}{\lvert \lambda_{j+j_1}-\lvert \left(  n+n_1-l_1-\ldots-l_s \right)\delta \lvert^2\lvert}=O\left(\lvert r \lvert^{-2}\right)$.
 Substituting this result into \eqref{6r} and using \eqref{decompsum}, we get the proof.
  \end{proof}
\par By Lemma 2, the equation \eqref{decom3} becomes;
\begin{align*}
(V-P)\chi_{j , \beta}&=O(\rho^{-p\alpha})+\\&\sum_{\lvert j_1^{'} \lvert < 6r \atop \left( \beta_1,n_1 \right)\in \Gamma^{'}(\rho^{\alpha})}\left(\sum_{\lvert n\lvert<2r}\sum_{k=1}^{m}\sum_{i=1}^{m}d_{ik}\left(n_1,\beta_1\right)\langle \varphi_{j},C_{n,k} \rangle \langle \overline{\varphi_{j+j_1{'}},C_{n+n_1,i}}  \rangle \right)\chi_{j+j_1^{'},\beta+\beta_1}\\&=O(\rho^{-p\alpha})+\\&\sum_{\lvert j_1 \lvert < 6r \atop \left( \beta_1,n_1 \right)\in \Gamma^{'}(\rho^{\alpha})}\left(\sum_{\lvert n\lvert<2r}\sum_{k=1}^{m}\sum_{i=1}^{m}d_{ik}\left(n_1,\beta_1\right)\langle \varphi_{j},C_{n,k} \rangle \langle \overline{\varphi_{j+j_1},C_{n+n_1,i}}  \rangle \right)\chi_{j+j_1,\beta+\beta_1},
\end{align*}
that is,
\begin{equation}\label{decomplast}
(V-P)\chi_{j , \beta}=\sum_{\left(\beta_1,j_1\right)\in \boldsymbol{Q}(\rho^{\alpha},6r)}A\left(j,\beta,j+j_1,\beta+\beta_1\right)\chi_{j+j_1,\beta+\beta_1}+O(\rho^{-p\alpha}),
\end{equation}
for every $j$ satisfying $\lvert j \lvert+1<r$, where
$$\boldsymbol{Q}(\rho^{\alpha},6r)=\left\{(j,\beta): \lvert j \delta \lvert <6r \; ,\; 0<\lvert \beta  \lvert < \rho^{\alpha} \right\}, $$
$$A\left(j,\beta,j+j_1,\beta+\beta_1\right)=\sum_{n_1:\left(n_1,\beta_1\right)\in \Gamma^{'}(\rho^{\alpha})}\left(\sum_{\lvert n\lvert<2r}\sum_{k=1}^{m}\sum_{i=1}^{m}d_{ik}\left(n_1,\beta_1\right)\langle \varphi_{j},C_{n,k} \rangle \langle \overline{\varphi_{j+j_1},C_{n+n_1,i}}  \rangle \right). $$
We need to prove that
\begin{equation}\label{absolute}
\sum_{\left(\beta_1,j_1\right)\in \boldsymbol{Q}(\rho^{\alpha},6r)}\left\lvert A\left(j,\beta,j+j_1,\beta+\beta_1\right)\right\lvert<c_3.
\end{equation}
By the definition of $A\left(j,\beta,j+j_1,\beta+\beta_1\right)$, $d_{ik}\left(n_1,\beta_1\right)$ and \eqref{decompsum}, we have
\begin{align}\nonumber
\sum_{\left(\beta_1,j_1\right)\in \boldsymbol{Q}(\rho^{\alpha},6r)} & \left\lvert A\left(j,\beta_1,j^{'},\beta+\beta_1\right)\right\lvert\\ \nonumber &\leq\sum_{n_1:\left(n_1,\beta_1\right)\in \Gamma^{'}(\rho^{\alpha})}\sum_{i,k=1}^{m}\left\lvert d_{ik}(n_1,\beta_1)\right\lvert\sum_{\lvert n\lvert<2r}\left\lvert \langle \varphi_{j},C_{n,k} \rangle\right \lvert\sum_{\lvert j_1\lvert<6r}\left\lvert \langle \overline{\varphi_{j+j_1},C_{n+n_1,i}}  \rangle \right \lvert\\  \label{A}&\leq c_4 \sum_{\lvert n\lvert<2r}\left\lvert \langle \varphi_{j},C_{n,k} \rangle\right \lvert\sum_{\lvert j_1\lvert<6r}\left\lvert \langle \varphi_{j+j_1},C_{n+n_1,i}  \rangle \right \lvert
\end{align}
Now we prove that
\begin{equation}\label{c67}
\sum_{n\in \mathbb{Z}}\left\lvert \langle \varphi_{j},C_{n,k} \rangle\right \lvert<c_5 \text{\;\;\;\;\;and\;\;\;\;}\sum_{j_1\in\mathbb{Z}}\left\lvert \langle \varphi_{j+j_1},C_{n+n_1,i}  \rangle \right \lvert<c_6
\end{equation}
For this, let $$A=\left\{n\in \mathbb{Z}\;\;|\;\;\lvert n\delta \lvert^2\in [\lambda_{j-1},\lambda_{j+1}]\;\;\right\}$$ and  $$B=\left\{j_1\in \mathbb{Z}\;\;|\;\;\lambda_{j+j_1}\in \left[\;\left \lvert (n+n_1)\delta\right\lvert^2-1,\;\;\left\lvert (n+n_1)\delta\;\right\lvert^2+1\;\right]\;\;\right\},$$ then it follows from \eqref{eigenvalue} that the number of elements in the sets $A$ and $B$ are less than $c_7$. So if we isolate the terms with $n\in A$ and $j_1\in B$ in the first and second summations of inequalities in \eqref{c67}, respectively, appliying  \eqref{binding1} to the other tems then using the facts
\begin{equation*}
\sum_{n\notin A}\frac{1}{\lvert\lambda_j-\lvert n\delta\lvert^2\lvert}<c_8,\;\;\;\;\sum_{j_1\notin \beta}\frac{1}{\lvert\lambda_{j+j_1}-\lvert (n+n_1)\delta\lvert^2\lvert}<c_9
\end{equation*}
we get \eqref{c67}, hence by \eqref{A},  \eqref{absolute} is proved.
\par The expressions \eqref{decomplast} and \eqref{binding} together
imply that
\begin{equation}\label{decom3}
\left(\Lambda_N -\lambda_{j^{'},\beta^{'}} \right)\langle \psi_N \;,\; \chi_{j^{'},\beta^{'}}\rangle=\sum_{\left(\beta_1,j_1\right)\in {\boldsymbol{Q}(\rho^{\alpha}6r)}}
A\left(j^{'},\beta^{'},j^{'}+j_1,\beta^{'}+\beta_1\right)\langle \psi_N \;,\; \chi_{j^{'}+j_1,\beta^{'}+\beta_1}\rangle+O(\rho^{-p\alpha}).
\end{equation}
If the condition (iterability condition for the triple $(N,j^{'},\beta^{'})$ )
\begin{align}\label{cond1}
\lvert \Lambda_{N}-\lambda_{j^{'},\beta^{'}} \lvert >c_{10}
\end{align}
holds then the formula \eqref{decom3} can be written in the following form
\begin{align}\label{decom4}
\langle \psi_N \;,\; \chi_{j^{'},\beta^{'}}\rangle=\sum_{\left(\beta_1,j_1\right)\in {\boldsymbol{Q}(\rho^{\alpha}6r)}}
\frac{A\left(j^{'},\beta^{'},j^{'}+j_1,\beta^{'}+\beta_1\right)\langle \psi_N \;,\; \chi_{j^{'}+j_1,\beta^{'}+\beta_1}\rangle}{\Lambda_N -\lambda_{j^{'},\beta^{'}}}+O(\rho^{-p\alpha}).
\end{align}
Using \eqref{decom3} and \eqref{decom4}, we are going to find $\Lambda_{N}$ which is close to $\lambda_{j,\beta}$, where $\lvert j \lvert +1<r_1.$For this, first in \eqref{decom3} instead of $j^{'},\beta^{'}$, taking $j,\beta$, hence instead of $r$ taking $r_1$, we get
\begin{align}\label{decom5}
\left(\Lambda_N -\lambda_{j,\beta}\right)\langle \psi_N \;,\; \chi_{j,\beta}\rangle=\sum_{\left(\beta_1,j_1\right)\in {\boldsymbol{Q}(\rho^{\alpha},6r_1)}}
A\left(j,\beta,j+j_1,\beta+\beta_1\right)\langle \psi_N \;,\; \chi_{j+j_1,\beta+\beta_1}\rangle+O(\rho^{-p\alpha}).
\end{align}
  To iterate it by using \eqref{decom4} for $j^{'}=j+j_1$ and $\beta^{'}=\beta+\beta_1$, we will prove that there is a number $N$ such that
\begin{equation}\label{cond4}
\lvert \Lambda_{N}-\lambda_{j+j_1,\beta+\beta_1}\lvert>\frac{1}{2}\rho^{\alpha_2},
\end{equation}
where $\lvert j+j_1 \lvert<7r_1\equiv r_2$, since $\lambda_{j,\beta}$ and $\lvert j_1 \lvert<6r_1.$ Then $(j+j_1,\beta+\beta_1)$ satisfies \eqref{cond1}. This means that, in formula \eqref{decom2}, the pair $(j^{'},\beta^{'})$ can be replaced by the pair $(j+j_1,\beta+\beta_1)$. Then, \eqref{decom2} instead of $r$ taking $r_2$, we get
\begin{align*}
&\langle \psi_N \;,\; \chi_{j+j_1,\beta+\beta_1}\rangle=\\&O(\rho^{-p\alpha})+\sum_{\left(\beta_2,j_2\right)\in {\boldsymbol{Q}(\rho^{\alpha},6r_2)}}
\frac{A\left(j+j_1,\beta+\beta_1,j+j_1+j_2,\beta+\beta_1+\beta_2\right)\langle \psi_N \;,\; \chi_{j+j_1+j_2,\beta+\beta_1
+\beta_2}\rangle}{\Lambda_N -\lambda_{j+j_1,\beta+\beta_1}}.
\end{align*}
Putting the above formula into \eqref{decom5}, we obtain
\begin{align}\label{decom8}
\left(\Lambda_{N}-\lambda_{j,\beta}\right)c(N,j,\beta)=O(\rho^{-p\alpha})+\sum_{\left(\beta_1,j_1\right)\in {\boldsymbol{F}(\rho^{\alpha},6r_1)}\atop\left(\beta_2,j_2\right)\in {\boldsymbol{F}(\rho^{\alpha},6r_2)}}\frac{A\left(j,\beta,j^{1},\beta^{1}\right)A\left(j^1,\beta^1,j^2,\beta^2\right)c(N,j^2,\beta^2)}{\Lambda_{N}-\lambda_{j^1,\beta^1}}
\end{align}
where $c(N,j,\beta)=\langle\psi_{N},\chi_{j,\beta}\rangle$, $j^k=j+j_1+j_2+\ldots+j^k$ and $\beta^k=\beta+\beta_1+\beta_2+\ldots+\beta_k$. Thus, we are going to find a number $N$ such that $c(N,j,\beta)$ is not too small and the condition \eqref{cond4} is satisfied.
\begin{lemma}\label{lemma 3}
\begin{itemize}
\item[(a)]
Suppose $h_1(x), h_2(x),\ldots,h_{p_2}(x)\in L_2^m (F)$ where $p_2=[\frac{d}{2\alpha_2}]+1$. Then for every eigenvalue $\lambda_{j,\beta}$ of the operator $L(P(s)),$ there exists an eigenvalue $\Lambda_{N}$ of $L(V)$ satisfying
\begin{itemize}
\item[(i)]
$\lvert \Lambda_{N}-\lambda_{j,\beta}\lvert<2M$, where $M=\rVert V \rVert$,
\item[(ii)]
$\lvert c(N,j,\beta) \lvert >\rho^{-q\alpha},$ where $q\alpha=[\frac{d}{2\alpha}+2]\alpha,$
\item[(iii)]
$\lvert c(N,j,\beta) \lvert ^2>\frac{1}{2p_2}\sum_{i=1}^{p_2} \lvert \langle\psi_{N},\frac{h_i}{\rVert h_i \rVert}\rangle \lvert^2>\frac{1}{2p_2}\lvert \langle\psi_{N},\frac{h_i}{\rVert h_i \rVert}\rangle\lvert^2,$ $\forall i=1,2,\dots,p_2.$
\end{itemize}
\item[(b)]
Let $\gamma=\beta+j\delta\in V_{\delta}^{'}(\alpha)$ and $(\beta_1,j_1)\in Q(\rho^{\alpha},6r_1), (\beta_k,j_k)\in Q(\rho^{\alpha},6r_k),$ where $r_k=7r_{k-1}$ for $k=2,3,\ldots, p$. Then for $k=1,2,3,\ldots,p,$ we have
\begin{align}\label{cond6}
\lvert \lambda_{j,\beta}-\lambda_{j^k,\beta^k} \lvert>\frac{3}{5}\rho^{\alpha_2},\;\; \forall \beta^k \neq \beta.
\end{align}
\end{itemize}
\end{lemma}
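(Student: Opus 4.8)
The plan is to prove part (a) by a spectral projection estimate combined with a pigeonhole count over the eigenvalues of $L(V)$ lying near $\lambda_{j,\beta}$, and part (b) by a direct geometric computation that uses the hypothesis $\gamma\notin E_2$ to forbid a second resonance.

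For (a) I would first normalise so that $\|\chi_{j,\beta}\|=\|\psi_N\|=1$ (harmless up to $\rho$-independent constants) and set $c(N)=c(N,j,\beta)$. From $L(V)=L(P(s))+(V-P)$ one gets $(L(V)-\lambda_{j,\beta})\chi_{j,\beta}=(V-P)\chi_{j,\beta}$, and since the Fourier series of $V-P$ only uses Fourier coefficients of $V$, $\|(V-P)\chi_{j,\beta}\|\le M$. Expanding $\chi_{j,\beta}$ in the orthonormal eigenbasis $\{\psi_N\}$ and using Parseval gives $\sum_N|\Lambda_N-\lambda_{j,\beta}|^{2}|c(N)|^{2}=\|(V-P)\chi_{j,\beta}\|^{2}\le M^{2}$, hence for $S=\{N:|\Lambda_N-\lambda_{j,\beta}|<2M\}$ one has $\sum_{N\notin S}|c(N)|^{2}\le\frac14$ and therefore $\sum_{N\in S}|c(N)|^{2}\ge\frac34$. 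Next I would call $N\in S$ bad if $|c(N)|^{2}\le\frac1{2p_2}\sum_{i=1}^{p_2}|\langle\psi_N,h_i/\|h_i\|\rangle|^{2}$; summing this over bad $N$ and using Bessel's inequality $\sum_N|\langle\psi_N,h_i/\|h_i\|\rangle|^{2}=1$ shows $\sum_{\mathrm{bad}\ N}|c(N)|^{2}\le\frac12$, so the set $G$ of non-bad $N\in S$ satisfies $\sum_{N\in G}|c(N)|^{2}\ge\frac14$; every $N\in G$ then satisfies (i) and (iii). Finally, because $\lambda_{j,\beta}=O(\rho^{2})$ the number of eigenvalues of $L(V)$ in $(\lambda_{j,\beta}-2M,\lambda_{j,\beta}+2M)$ is $O(\rho^{d-1})$ (compare with $L(0)$ via min-max and count lattice points of $\frac{\Gamma}{2}$ in a spherical shell of bounded radial width), so $|G|=O(\rho^{d-1})$ and by pigeonhole some $N\in G$ has $|c(N)|^{2}\ge c\rho^{-(d-1)}$; since $q\alpha=([\frac{d}{2\alpha}]+2)\alpha>\frac{d-1}{2}$ this gives $|c(N)|>\rho^{-q\alpha}$ for $\rho$ large, which is (ii).

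For (b) the key observation is the following. Write $\gamma^{k}=\gamma+b_k$ with $b_k=B_k+J_k\delta$, $B_k=\sum_{i=1}^{k}\beta_i\in\Gamma_\delta$, $J_k=\sum_{i=1}^{k}j_i$. The condition $\beta^{k}\ne\beta$ says $B_k\ne0$, and $(\beta_i,j_i)\in Q(\rho^{\alpha},6r_i)$ forces $0<|B_k|<p\rho^{\alpha}$, so the primitive (line-minimal) vector $b'$ of $\frac{\Gamma}{2}\cap\mathbb{R}B_k$ lies in $\Gamma(p\rho^{\alpha})$ and is independent of $\delta$ since $B_k\perp\delta$. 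As $\gamma\in V_\delta(\rho^{\alpha_1})\subseteq V_\delta(\rho^{\alpha_2})$, if we also had $\gamma\in V_{b'}(\rho^{\alpha_2})$ then $\gamma\in V_\delta(\rho^{\alpha_2})\cap V_{b'}(\rho^{\alpha_2})\subseteq E_2(\rho^{\alpha_2},p)$, contradicting $\gamma\notin E_2$; hence $||\gamma|^{2}-|\gamma+b'|^{2}|\ge\rho^{\alpha_2}$, and since $B_k$ is an integer multiple of $b'$ with $|B_k|<p\rho^{\alpha}$, also $||\gamma|^{2}-|\gamma+B_k|^{2}|\ge\rho^{\alpha_2}-O(\rho^{2\alpha})$. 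Then, using $B_k\perp\delta$ to get $|\gamma|^{2}-|\gamma+B_k|^{2}=|\beta|^{2}-|\beta^{k}|^{2}$, I would write $\lambda_{j,\beta}-\lambda_{j^{k},\beta^{k}}=(|\gamma|^{2}-|\gamma+B_k|^{2})+(\lambda_j-\lambda_{j^{k}})$, estimate $|\lambda_j-\lambda_{j^{k}}|=O(\rho^{2\alpha_1})$ from \eqref{eigenvalue}, $|\lambda_j-|j\delta|^{2}|<\sup P(s)$ and the bounds $|j|<r_1$ (see \eqref{single}), $|j^{k}|,|J_k|=O(r_1)=O(\rho^{\alpha_1})$, and conclude $|\lambda_{j,\beta}-\lambda_{j^{k},\beta^{k}}|\ge\rho^{\alpha_2}-O(\rho^{2\alpha})-O(\rho^{2\alpha_1})>\frac35\rho^{\alpha_2}$ for $\rho$ large, since $2\alpha<\alpha_2$ and (by \eqref{alfa}) $2\alpha_1<\alpha_2$.

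The main obstacle is the quantitative part of (a), items (ii) and (iii): one has to guarantee that the set $G$ of ``useful'' indices carries a fixed positive share (here $\ge\frac14$) of the $\ell^{2}$-mass of $\chi_{j,\beta}$ before the pigeonhole argument is applied. This is exactly why the half-width $2M$ of $S$ is chosen so that $\|(V-P)\chi_{j,\beta}\|\le M$ yields $\sum_{N\notin S}|c(N)|^{2}\le\frac14$, and why the weight $\frac1{2p_2}$ in the definition of ``bad'' is chosen so that Bessel's inequality over the $p_2$ functions $h_i$ yields $\sum_{\mathrm{bad}}|c(N)|^{2}\le\frac12$; the two bounds must leave a genuine gap. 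Everything else is bookkeeping: the $O(\rho^{d-1})$ count of nearby eigenvalues, and in (b) the reduction to the primitive vector $b'$ (so that the line-minimality clause in the definition of $E_2$ applies) together with checking that this reduction and the $\lambda_j-\lambda_{j^{k}}$ error are both $o(\rho^{\alpha_2})$.
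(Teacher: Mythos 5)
Your proposal is correct and takes essentially the same route as the paper: in part (a) both arguments rest on the binding formula \eqref{binding} together with Bessel/Parseval to place $\ell^2$-mass $\geq 3/4$ on the indices with $\lvert\Lambda_N-\lambda_{j,\beta}\rvert<2M$, a polynomial bound on the number of such indices, and the $\frac{1}{2p_2}$-weighted Bessel estimate over the $h_i$ — you merely secure (iii) first and then get (ii) by pigeonhole over the $O(\rho^{d-1})$ nearby eigenvalues, whereas the paper thresholds for (ii) first and obtains (iii) by contradiction. In part (b) your computation is the paper's, except that you make explicit two points the paper glosses over, namely the passage to the line-minimal vector $b'$ of $\frac{\Gamma}{2}\cap\mathbb{R}B_k$ so that the definition of $E_2$ (with its minimality clause) genuinely applies, and the estimate $\lvert\lambda_j-\lambda_{j^k}\rvert=O(\rho^{2\alpha_1})$ coming from \eqref{eigenvalue}; both refinements are consistent with, and slightly sharpen, the paper's argument.
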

\begin{proof}
 \begin{itemize}
 \item[(a)]
 Let $A,B,C$ be the set of indexes $N$ satisfying (i),
(ii), (iii), respectively. Using the binding formula
\eqref{binding} for $L(V)$ and $L(P(s))$ and the Bessel's
inequality, we get
\begin{eqnarray*}
\sum_{N \notin A}|c(N,j,\beta)|^2=\sum_{N \notin
A}\left|\frac{(\psi_N,(V-P)\chi_{j,\beta}) }{\Lambda_N
-\lambda_{j,\beta}}\right|^2 \\
\leq \frac{1}{4M^2}\|(V-P)\chi_{j,\beta}\|^2 \leq
\frac{1}{4}.
\end{eqnarray*}
Hence by Parseval's relation, we obtain
$$
\sum_{N \in A}|c(N,j,\beta)|^2>\frac{3}{4}.
$$
Using the fact that the number of indexes $N$ in $A$ is less than
$\rho^{d\alpha}$ and by the relation $N \notin B
\hspace{.2in}\Rightarrow \hspace{.2in}
|c(N,j,\beta)|<\rho^{-q\alpha}$, we have
$$
\sum_{N \in A\setminus
B}|c(N,j,\beta)|^2<\rho^{d\alpha}\rho^{-q\alpha}<\rho^{-\alpha},
$$
since $\alpha<\frac{1}{d+20}. $ On the other hand by the relation $A=(A\setminus B)\bigcup (A\bigcap B)$ and the above
inequalities, we get
$$
\frac{3}{4}<\sum_{N \in A}|c(N,j,\beta)|^2=\sum_{N \in A\setminus
B}|c(N,j,\beta)|^2+\sum_{N \in A\bigcap B}|c(N,j,\beta)|^2,
$$
which implies
\begin{equation}\label{AB}
\sum_{N \in A\bigcap
B}|c(N,j,\beta)|^2>\frac{3}{4}-\rho^{-\alpha}>\frac{1}{2}.
\end{equation}
Now, suppose that $A\bigcap B\bigcap C = \emptyset$, i.e., for all
$N \in A\bigcap B $, the condition (iii) does not hold. Then by
(\ref{AB}) and Bessel's inequality, we have
\begin{eqnarray*}
\frac{1}{2}<\sum_{N \in A\bigcap B}|c(N,j,\beta)|^2\leq \sum_{N
\in A\bigcap
B}\frac{1}{2p_2}\sum_{i=1}^{p_2}\left|\left \langle\psi_N,\frac{h_i}{\|h_i\|}\right\rangle \right|^2
\\
=\frac{1}{2p_2}\sum_{i=1}^m \sum_{N \in A\bigcap B}\left|
\left\langle\psi_N,\frac{h_i}{\|h_i\|}\right\rangle\right|^2 < \frac{1}{2p_2}\sum_{i=1}^{p_2}
\left\|\frac{h_i}{\|h_i\|}\right\|^2=\frac{1}{2}\;,
\end{eqnarray*}
which is a contradiction.
\item[(b)] The definition of $\lambda_{j,\beta}$ gives
\begin{align}\label{lambdajjk}
|\lambda_{j,\beta}-\lambda_{j^k,\beta^k}|&=
||\beta|^2+\lambda_j-|\beta+\beta_1+...+\beta_k|^2-\lambda_{j^k}|
\nonumber \\
&\geq
|||\beta|^2-|\beta+\beta_1+...+\beta_k|^2|-|\lambda_j-\lambda_{j^k}||.
\end{align}
The condition of the lemma $(\beta_1,j_1)\in Q(\rho^{\alpha},6r_1),(\beta_k,j_k)\in Q(\rho^{\alpha},6r_k)$ and the relation \newline
$\beta+j\delta\in V_{\delta}(\rho^{\alpha_1})\setminus E_2$
together with $|j\delta|<c_{11}\rho^{\alpha_1}$ (see \eqref{single}) and
$|j_i\delta|<c_{12}\rho^{\alpha_1}$ (see \eqref{single2})  imply that
\begin{align*}
\rho^{\alpha_2}&<||\beta|^2+|j\delta|^2-|\beta^k|^2-|j^k\delta|^2|  \\
&<||\beta|^2-|\beta^k|^2|+c_{9}\rho^{\alpha_1}, \hspace{.2in}
\beta_1+...+\beta_k\neq 0,
\end{align*}
since $\beta,\beta_1,...,\beta_k$ are orthogonal to $\delta$. That
is, we have
\begin{eqnarray*}
||\beta|^2-|\beta_k|^2|>c_{13}\rho^{\alpha_2}.
\end{eqnarray*}
This last inequality together with (\ref{lambdajjk}) and the
asymptotic formula \eqref{eigenvalue} give
$$
|\lambda_{j,\beta}-\lambda_{j^k,\beta^k}|>c_{14}\rho^{\alpha_2}.
$$
\end{itemize}
\end{proof}
\begin{center}
\large
Asymptotic Formulas
\end{center}
Now we consider the following function
\begin{align}\label{decom7}
h_i(x)=\sum_{\left(\beta_1,j_1\right)\in {\boldsymbol{Q}(\rho^{\alpha},6r_1)}\atop\left(\beta_2,j_2\right)\in {\boldsymbol{Q}(\rho^{\alpha},6r_2)}}\frac{A\left(j,\beta,j^{1},\beta^{1}\right)A\left(j^1,\beta^1,j^2,\beta^2\right)\chi_{j^{(2)},\beta^{(2)}}}{\left(\lambda_{j,\beta}-\lambda_{j^1,\beta ^1}\right)^i},\;\;\; 1\leq i \leq p_2.
\end{align}
Since $\left\{\chi_{j^{(2)},\beta^{(2)}}(x)\right\}$ is a total system and $\beta_1\neq 0$ by \eqref{absolute} and \eqref{cond6}, we have
\begin{align}\nonumber
\sum_{(j^{'},\beta^{'})}\lvert \langle h_i(x), \chi_{j^{'},\beta^{'}  }\rangle \lvert^2&=\sum_{\left(\beta_1,j_1\right)\in {\boldsymbol{F}(\rho^{\alpha},6r_1)}\atop\left(\beta_2,j_2\right)\in {\boldsymbol{F}(\rho^{\alpha},6r_2)}}\frac{\lvert A\left(j,\beta,j^{1},\beta^{1}\right)A\left(j^1,\beta^1,j^2,\beta^2\right)\lvert^2}{\lvert \left(\lambda_{j,\beta}-\lambda_{j^1,\beta ^1}\right)^i \lvert^2}\\ \label{decom9}&\leq c_{12}\; \rho^{-2i\alpha_2},
\end{align}
i.e., $h_i(x)\in L_2^m(F)$ and $\rVert h_i(x) \rVert=O(\rho^{-i\alpha_2}),\;\;\;\forall i=1,2,\ldots,p_2.$
\begin{theorem}
For every eigenvalue  $\lambda_{j,\beta}$ of the operator $L(P(s))$ with $\beta+j\delta\in V_{\delta}^{'}(\rho^{\alpha_1})$, there exists an eigenvalue $\Lambda_{N}$ of the operator $L(V)$ satisfying
\begin{align}\label{result1}
\Lambda_{N}=\lambda_{j,\beta}+O\left(\rho^{-\alpha_2}\right).
\end{align}
\end{theorem}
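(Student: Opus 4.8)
\emph{Plan of proof.} The plan is a Veliev-type iteration: use Lemma~3(a) to manufacture a concrete eigenvalue $\Lambda_N$ of $L(V)$ which is simultaneously close to $\lambda_{j,\beta}$ and has a controlled projection $c(N,j,\beta)=\langle\psi_N,\chi_{j,\beta}\rangle$ onto the unperturbed eigenfunction, and then feed this $\Lambda_N$ into the twice-iterated binding formula \eqref{decom8}, whose right-hand side has been arranged to be expressible through the auxiliary functions $h_i$ of \eqref{decom7}.

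First I would apply Lemma~3(a) with the test functions $h_1,\dots,h_{p_2}\in L_2^m(F)$ defined in \eqref{decom7}, where $p_2=\bigl[\frac{d}{2\alpha_2}\bigr]+1$. This yields an index $N$ with $|\Lambda_N-\lambda_{j,\beta}|<2M$ (property (i)), $|c(N,j,\beta)|>\rho^{-q\alpha}$ (property (ii)), and $|c(N,j,\beta)|^2>\frac{1}{2p_2}\bigl|\langle\psi_N,h_i/\|h_i\|\rangle\bigr|^2$ for each $i$ (property (iii)). Combining (iii) with the norm estimate $\|h_i\|=O(\rho^{-i\alpha_2})$ from \eqref{decom9} gives
\[
|\langle\psi_N,h_i\rangle|\le\sqrt{2p_2}\,\|h_i\|\,|c(N,j,\beta)|=O(\rho^{-i\alpha_2})\,|c(N,j,\beta)|,\qquad i=1,\dots,p_2 .
\]

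The key manipulation is to re-expand the single denominator occurring in \eqref{decom8}. Since $\beta_1\neq0$ for every index appearing in that sum, Lemma~3(b) (applicable because $\beta+j\delta$ lies in the single resonance domain by hypothesis) gives $|\lambda_{j,\beta}-\lambda_{j^1,\beta^1}|>\frac35\rho^{\alpha_2}$; together with (i) this ensures $|\Lambda_N-\lambda_{j,\beta}|\ll|\lambda_{j,\beta}-\lambda_{j^1,\beta^1}|$ and, in particular, $|\Lambda_N-\lambda_{j^1,\beta^1}|>\frac12\rho^{\alpha_2}$. I would therefore substitute the finite geometric expansion
\[
\frac{1}{\Lambda_N-\lambda_{j^1,\beta^1}}=\sum_{i=1}^{p_2}\frac{(\lambda_{j,\beta}-\Lambda_N)^{i-1}}{(\lambda_{j,\beta}-\lambda_{j^1,\beta^1})^{i}}+\frac{(\lambda_{j,\beta}-\Lambda_N)^{p_2}}{(\lambda_{j,\beta}-\lambda_{j^1,\beta^1})^{p_2}(\Lambda_N-\lambda_{j^1,\beta^1})}
\]
into \eqref{decom8} and sum over the same index pairs $(\beta_1,j_1),(\beta_2,j_2)$ as in \eqref{decom7}. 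By the definition \eqref{decom7} of $h_i$, the coefficient of $(\lambda_{j,\beta}-\Lambda_N)^{i-1}$ becomes $\langle\psi_N,h_i\rangle$ up to complex conjugation, while the remainder term is bounded, using $\sum|A|<c_3$ from \eqref{absolute} together with the two denominator lower bounds above, by $O(\rho^{-(p_2+1)\alpha_2})$. Collecting terms and using the bound on $|\langle\psi_N,h_i\rangle|$ (the $i=1$ term dominating the geometric-type sum), I obtain
\[
\bigl|(\Lambda_N-\lambda_{j,\beta})c(N,j,\beta)\bigr|\le O(\rho^{-\alpha_2})\,|c(N,j,\beta)|+O(\rho^{-p\alpha})+O(\rho^{-(p_2+1)\alpha_2}).
\]

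Dividing through by $|c(N,j,\beta)|>\rho^{-q\alpha}$ then gives
\[
\Lambda_N-\lambda_{j,\beta}=O(\rho^{-\alpha_2})+O(\rho^{(q-p)\alpha})+O(\rho^{\,q\alpha-(p_2+1)\alpha_2}),
\]
and it remains to check, from the standing relations $l>\frac{(d+20)(d-1)}{2}+d+3$, $\alpha<\frac{1}{d+20}$, $\alpha_2=9\alpha$, $p=l-d$, $p_2=\bigl[\frac{d}{2\alpha_2}\bigr]+1$ and the value of $q$ in Lemma~3(a)(ii), that $(p-q)\alpha\ge\alpha_2$ and $(p_2+1)\alpha_2-q\alpha\ge\alpha_2$, so that the last two error terms are absorbed into $O(\rho^{-\alpha_2})$; this is precisely \eqref{result1}. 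I expect this final exponent bookkeeping to be the only genuinely delicate step: one must be sure that the loss $\rho^{q\alpha}$ suffered when dividing by the lower bound on $c(N,j,\beta)$ is outweighed both by the gain $\rho^{-p\alpha}$ from the binding-formula error term and by the gain $\rho^{-(p_2+1)\alpha_2}$ from truncating the geometric series. A secondary point, used throughout, is that every first-level denominator $\lambda_{j,\beta}-\lambda_{j^1,\beta^1}$ stays bounded below by a constant times $\rho^{\alpha_2}$, which is exactly where the constraint $\beta_1\neq0$ and Lemma~3(b) enter.
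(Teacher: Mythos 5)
Your proposal is correct and follows essentially the same route as the paper's proof: apply Lemma~3(a) with the test functions $h_i$ of \eqref{decom7}, use Lemma~3(b) (via $\beta_1\neq 0$) to get $\lvert\Lambda_N-\lambda_{j^1,\beta^1}\rvert>c\,\rho^{\alpha_2}$, expand $\frac{1}{\Lambda_N-\lambda_{j^1,\beta^1}}$ as a finite geometric series so that \eqref{decom8} is expressed through $\langle\psi_N,h_i\rangle$, then divide by $c(N,j,\beta)$ and use (ii), (iii) together with $\lVert h_i\rVert=O(\rho^{-i\alpha_2})$ and the exponent inequality $(p_2+1)\alpha_2-q\alpha>\alpha_2$. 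The only cosmetic differences (sign convention in the expansion and carrying the factor $\lvert c(N,j,\beta)\rvert$ until the final division) do not affect the argument.
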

\begin{proof}
By Lemma 3, for the chosen $h_i(x), i=1,2,\ldots,p_2$ in \eqref{decom7}, there exists a number $N$, satisfying $(i),(ii),(iii).$ Since $(\beta_1,j_1)\in Q(\rho^{\alpha},6r_1)$, by part (b) of Lemma 3, we have
\begin{align*}
\lvert   \lambda_{j,\beta}-\lambda_{j^1,\beta^1}\lvert>c_{15}\rho^{\alpha_2}.
\end{align*}
The above inequality together with (i) imply
\begin{align*}
\lvert \Lambda_{N}-\lambda_{j^1,\beta^1}\lvert>c_{16}\rho^{\alpha_2}.
\end{align*}
Using the following well known decomposition
\begin{align*}
\frac{1}{\left[ \Lambda_{N}-\lambda_{j^1,\beta^1}\right]}=\sum_{i=1}^{p_2}\frac{\left[ \Lambda_{N}-\lambda_{j,\beta}\right]^{i-1}}{\left[ \lambda_{j,\beta}-\lambda_{j^1,\beta^1} \right]^i}+O\left(\rho^{-(p_2+1)\alpha_2}\right),
\end{align*}
and \eqref{decom7}, we see that the formula \eqref{decom8} can be written as
\begin{align*}
\left(\Lambda_{N}-\lambda_{j,\beta}\right)c(N,j,\beta)&=O(\rho^{-p\alpha})+\sum_{\left(\beta_1,j_1\right)\in {\boldsymbol{F}(\rho^{\alpha},6r_1)}\atop\left(\beta_2,j_2\right)\in {\boldsymbol{F}(\rho^{\alpha},6r_2)}}\frac{A\left(j,\beta,j^{1},\beta^{1}\right)A\left(j^1,\beta^1,j^2,\beta^2\right)\langle\psi_N,\chi_{j^2,\beta^2}\rangle}{\Lambda_{N}-\lambda_{j^1,\beta ^1}}\\
&=\sum_{i=1}^{p_2}\left[ ( \Lambda_{N}-\lambda_{j,\beta})^{i-1}\left\langle \psi_{N},\frac{h_i}{\lVert h_i \lVert}\right\rangle\right] \lVert h_i \lVert+O\left( \rho^{-(p_2+1)\alpha_2} \right).
\end{align*}
Now dividing both sides of the last equation by $c(n,j,\beta)$ and using $(ii), (iii)$, we have
\begin{align*}
&\lvert \Lambda_{N}-\lambda_{j,\beta}\lvert\leq O\left( \rho^{-(p_2+1)\alpha_2+q\alpha} \right)+ \\&\frac{\left\lvert
\left\langle\psi_{N},\frac{h_1}{\lVert h_1 \lVert}\right\rangle\right\lvert}{\lvert c(N,j,\beta) \lvert}\lVert h_1 \lVert + \frac{\lvert \Lambda_{N}-\lambda_{j,\beta} \lvert\; \left \lvert \left \langle\psi_{N},\frac{h_2}{\lVert h_2 \lVert}\right\rangle\right\lvert}{\lvert c(N,j,\beta) \lvert}\lVert h_2 \lVert +\ldots+\frac{\lvert \Lambda_{N}-\lambda_{j,\beta} \lvert^{(p_2-1)}\;\left\lvert \left\langle\psi_{N},\frac{h_{p_2}}{\lVert h_{p_2} \lVert} \right\rangle\right\lvert}{\lvert c(N,j,\beta) \lvert}\lVert h_{p_2} \lVert\\&\leq (2p_2)^\frac{1}{2}\left(\lVert h_1 \lVert+2M \lVert h_2 \lVert+\ldots+(2M)^{p_2-1}\lVert h_{p_2} \lVert\right)+O\left( \rho^{-(p_2+1)\alpha_2+q\alpha} \right).
\end{align*}
Hence by \eqref{decom9}, we obtain
\begin{align*}
\Lambda_{N}=\lambda_{j,\beta}+O\left(\rho^{-\alpha_2}\right),
\end{align*}
since $(p_2+1)\alpha_2-q \alpha>\alpha_2$. Theorem is proved.
\end{proof}
It follows from \eqref{cond6} and \eqref{result1} that the triples $(N,j^k,\beta^k)$ for $k=1,2,\ldots,p_1,$ satisfy the iterability condition \eqref{cond1}. By \eqref{decom4} instead of $j^{'},\beta^{'}$ and $r$ taking $j^2,\beta^2$ and $r_3$, we have
\begin{align}\label{c2}
c(N,j^2,\beta^2)=\sum_{(\beta_3,j_3)\in Q(\rho^{\alpha},6r_3)}\frac{A(j^2,\beta^2,j^3,\beta^3)(\psi_{N},\chi_{j^3,\beta^3})}{\Lambda_{N}-\lambda_{j^2,\beta^2}}+O(\rho^{-p\alpha}).
\end{align}
To obtain the other terms of the asymptotic formula of $\Lambda_{N}$, we iterate the formula \eqref{decom8}. Now we isolate the terms with multiplicand $c(N,j,\beta)$ in the right hand side of \eqref{decom8}.
\begin{align}\nonumber
(\Lambda_{N}-\lambda_{j,\beta})c(N,j,\beta)=O(\rho^{-p\alpha})&+\sum_{\substack{\left(\beta_1,j_1\right)\in {\boldsymbol{Q}(\rho^{\alpha},6r_1)}\\\left(\beta_2,j_2\right)\in {\boldsymbol{Q}(\rho^{\alpha},6r_2)}\\ (j+j_1+j_2,\beta+\beta_1+\beta_2)=(j,\beta)}}\frac{A\left(j,\beta,j^{1},\beta^{1}\right)A\left(j^1,\beta^1,j,\beta\right)}{\Lambda_{N}-\lambda_{j^1,\beta ^1}}c(N,j,\beta)\\ &\label{sumc2}+\sum_{\substack{\left(\beta_1,j_1\right)\in {\boldsymbol{Q}(\rho^{\alpha},6r_1)}\\\left(\beta_2,j_2\right)\in {\boldsymbol{Q}(\rho^{\alpha},6r_2)}\\ (j+j_1+j_2,\beta+\beta_1+\beta_2)\neq(j,\beta)}}\frac{A\left(j,\beta,j^{1},\beta^{1}\right)A\left(j^1,\beta^1,j^2,\beta^2\right)}{\Lambda_{N}-\lambda_{j^1,\beta ^1}}c(N,j^2,\beta^2).
\end{align}
Substituting the equation \eqref{c2} into the second sum of the equation \eqref{sumc2}, we get
\begin{align}\nonumber
(\Lambda_{N}-\lambda_{j,\beta})c(N,j,\beta)&=\sum_{\substack{\left(\beta_1,j_1\right)\in {\boldsymbol{Q}(\rho^{\alpha},6r_1)}\\\left(\beta_2,j_2\right)\in {\boldsymbol{Q}(\rho^{\alpha},6r_2)}\\ (j^2,\beta^2)=(j,\beta)}}\frac{A\left(j,\beta,j^{1},\beta^{1}\right)A\left(j^1,\beta^1,j,\beta\right)}{\Lambda_{N}-\lambda_{j^1,\beta ^1}}c(N,j,\beta)\\ \nonumber &\label{sumc3}+\sum_{\substack{\left(\beta_1,j_1\right)\in {\boldsymbol{Q}(\rho^{\alpha},6r_1)}\\\left(\beta_2,j_2\right)\in {\boldsymbol{Q}(\rho^{\alpha},6r_2)}\\ (j^2,\beta^2)\neq(j,\beta)\\     \left(\beta_3,j_3\right)\in {\boldsymbol{Q}(\rho^{\alpha},6r_3)} }}\frac{A\left(j,\beta,j^{1},\beta^{1}\right)A\left(j^1,\beta^1,j^2,\beta^2\right)A\left(j^2,\beta^2,j^3,\beta^3\right)}{(\Lambda_{N}-\lambda_{j^1,\beta ^1})(\Lambda_{N}-\lambda_{j^2,\beta ^2})}c(N,j^3,\beta^3)\\&+O(\rho^{-p\alpha}).
\end{align}
Again isolating terms $c(N,j,\beta)$ in the last sum of the equation \eqref{sumc3}, we obtain
\begin{align}\nonumber
(\Lambda_{N}-\lambda_{j,\beta})c(N,j,\beta)&=[ \sum_{\substack{\left(\beta_1,j_1\right)\in {\boldsymbol{Q}(\rho^{\alpha},6r_1)}\\\left(\beta_2,j_2\right)\in {\boldsymbol{Q}(\rho^{\alpha},6r_2)}\\ (j^2,\beta^2)=(j,\beta)}}\frac{A\left(j,\beta,j^{1},\beta^{1}\right)A\left(j^1,\beta^1,j,\beta\right)}{\Lambda_{N}-\lambda_{j^1,\beta ^1}}\\ &+ \sum_{\substack{\left(\beta_1,j_1\right)\in {\boldsymbol{Q}(\rho^{\alpha},6r_1)}\\\left(\beta_2,j_2\right)\in {\boldsymbol{Q}(\rho^{\alpha},6r_2)}\\\left(\beta_3,j_3\right)\in {\boldsymbol{Q}(\rho^{\alpha},6r_3)}\\ (j^2,\beta^2)\neq(j,\beta)\\ (j^3,\beta^3)=(j,\beta) }}\frac{A\left(j,\beta,j^{1},\beta^{1}\right)A\left(j^1,\beta^1,j^2,\beta^2\right)A\left(j^2,\beta^2,j,\beta\right)}{(\Lambda_{N}-\lambda_{j^1,\beta ^1})(\Lambda_{N}-\lambda_{j^2,\beta ^2})}]c(N,j,\beta)\nonumber\\
&+\sum_{\substack{\left(\beta_1,j_1\right)\in {\boldsymbol{Q}(\rho^{\alpha},6r_1)}\\\left(\beta_2,j_2\right)\in {\boldsymbol{Q}(\rho^{\alpha},6r_2)}\\\left(\beta_3,j_3\right)\in {\boldsymbol{Q}(\rho^{\alpha},6r_3)}\\ (j^2,\beta^2)\neq(j,\beta)\\ \nonumber (j^3,\beta^3)\neq(j,\beta) }}\frac{A\left(j,\beta,j^{1},\beta^{1}\right)A\left(j^1,\beta^1,j^2,\beta^2\right)A\left(j^2,\beta^2,j^3,\beta^3\right)}{(\Lambda_{N}-\lambda_{j^1,\beta ^1})(\Lambda_{N}-\lambda_{j^2,\beta ^2})}c(N,j^3,\beta^3)\\&+O(\rho^{-p\alpha}).
\end{align}
In this way, iterating $2p$ times, we get
\begin{align}\label{2p}
(\Lambda_{N}-\lambda_{j,\beta})c(N,j,\beta)=\left[\sum_{k=1}^{2p}S_k^{'}\right]c(N,j,\beta)+C^{'}_{2p}+O(\rho^{-p\alpha}),
\end{align}
where
\begin{align}\label{sk}
S_k^{'}(\Lambda_{N},\lambda_{j,\beta})=\sum_{\substack{\left(\beta_1,j_1\right)\in {\boldsymbol{Q}(\rho^{\alpha},6r_1)}\\\left(\beta_{k+1},j_{k+1}\right)\in {\boldsymbol{Q}(\rho^{\alpha},6r_{k+1})}\\ (j^{k+1},\beta^{k+1})=(j,\beta)\\ (j^s,\beta^s)\neq(j,\beta),\;s=2,\ldots,k }}\left(\prod_{i=1}^{k}\frac{ A\left(j^{i-1},\beta^{i-1},j^i,\beta^{i}\right)}{(\Lambda_{N}-\lambda_{j^i,\beta ^i})}\right)A\left(j^k,\beta^k,j,\beta\right)
\end{align}
and
\begin{align}\label{ck}
C_k^{'}=\sum_{\substack{\left(\beta_1,j_1\right)\in {\boldsymbol{Q}(\rho^{\alpha},6r_1)}\\\left(\beta_{k+1},j_{k+1}\right)\in {\boldsymbol{Q}(\rho^{\alpha},6r_{k+1})}\\  (j^s,\beta^s)\neq(j,\beta),\;s=2,\ldots,k+1 }}\left(\prod_{i=1}^{k}\frac{ A\left(j^{i-1},\beta^{i-1},j^i,\beta^{i}\right)}{(\Lambda_{N}-\lambda_{j^i,\beta ^i})}\right)A\left(j^k,\beta^k,j^{k+1},\beta^{k+1}\right)c(N,j^{k+1},\beta^{k+1}).
\end{align}
Now we estimate $S_k^{'}$ and $C_k^{'}$. For this, we consider the terms which appear in the denominators of \eqref{sk} and \eqref{ck}. By the conditions under the summations in \eqref{sk} and \eqref{ck}, we have
$j_1+j_2+\ldots+j_i\neq 0$ or $\beta_1+\beta_2+\ldots+\ldots\beta_i\neq 0,$ for $i=2,3,\ldots,k.$
\\
\\
If $\beta_1+\beta_2+\ldots+\ldots\beta_i\neq 0,$ then by \eqref{cond6} and \eqref{result1}, we have
\begin{align}\label{neq}
\lvert \Lambda_{N}-\lambda_{j^i,\beta ^i} \lvert > \frac{1}{2}\rho^{\alpha_2}.
\end{align}
If $\beta_1+\beta_2+\ldots+\ldots\beta_i= 0,$ i.e.,
$j_1+j_2+\ldots+j_i\neq 0$, then by a well-known theorem
\begin{align}\nonumber
\lvert \lambda_{j,\beta}-\lambda_{j^i,\beta ^i} \lvert =\lvert \mu_j-\mu_{j^i} \lvert > c_{17},
\end{align}
hence by \eqref{result1}, we obtain
\begin{align}\label{eq}
\lvert \Lambda_{N}-\lambda_{j^i,\beta ^i} \lvert > \frac{1}{2}c_{18}.
\end{align}
Since $\beta_k\neq 0$ for all $k \leq 2p$, the relation $\beta_1+\beta_2+\ldots+\ldots\beta_i= 0$ implies    $\beta_1+\beta_2+\ldots+\ldots\beta_{i\pm1 }\neq 0$. Therefore the number of multiplicands $\Lambda_{N}-\lambda_{j^i,\beta ^i} $ in  \eqref{ck} satisfying \eqref{neq} is no less then $p$. Thus, by \eqref{absolute}, \eqref{neq} and \eqref{eq}, we get
\begin{align}\label{order}
S_1^{'}=O(\rho^{-\alpha_2}),\;\;\;\; C_{2p}^{'}=O(\rho^{-p\alpha_2})
\end{align}
\begin{theorem}
\begin{itemize}
\item[(a)]
For every eigenvalue $\lambda_{j,\beta}$ of $L(P(s))$ such that $\beta+j\delta
\in V_{\delta
}^{'}(\rho^{\alpha_1})$, there exists an eigenvalue $\Lambda_{N}$ of the operator $L(V)$ satisfying
\begin{align}\label{a1}
\Lambda_{N}=\lambda_{j,\beta}+E_{k-1}+O(\rho^{-k\alpha_2}),
\end{align}
where $E_0=0$, $E_s=\sum\limits_{k=1}^{2p}S_{k}^{'}(E_{s-1}+\lambda_{j,\beta},\lambda_{j,\beta}),\;\;\;s=1,2,\ldots$
\item[(b)]
If \begin{align}\label{b1}
\lvert \Lambda_{N}-\lambda_{j,\beta}\lvert<c_{19}
\end{align}
and
\begin{align}\label{b2}
\lvert c(N,j,\beta) \lvert>\rho^{-q\alpha}
\end{align}
hold then $\Lambda_{N}$ satisfies \eqref{a1}.
\end{itemize}
\end{theorem}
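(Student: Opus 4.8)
My plan is to obtain both parts by induction on $k$, taking as input the identity \eqref{2p} together with the estimates \eqref{order}, which are already in hand; the two parts will differ only in how the base case $k=1$ and the division of \eqref{2p} by $c(N,j,\beta)$ are justified.

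\textbf{Part (a).} For $k=1$ the claim reads $\Lambda_N=\lambda_{j,\beta}+O(\rho^{-\alpha_2})$, which is exactly Theorem 1 applied with the functions $h_i$ of \eqref{decom7}; that same application of Lemma 3 also furnishes an index $N$ with $|c(N,j,\beta)|>\rho^{-q\alpha}$. For the inductive step I would assume \eqref{a1} with $k=s$, i.e. $\Lambda_N=\lambda_{j,\beta}+E_{s-1}+O(\rho^{-s\alpha_2})$, and divide \eqref{2p} by $c(N,j,\beta)$. Since $|c(N,j,\beta)|>\rho^{-q\alpha}$, the term $C'_{2p}=O(\rho^{-p\alpha_2})$ contributes $O(\rho^{-p\alpha_2+q\alpha})$ and the remainder $O(\rho^{-p\alpha})$ contributes $O(\rho^{-p\alpha+q\alpha})$; both are $O(\rho^{-(s+1)\alpha_2})$ once $l$ (hence $p=l-d$) is large enough relative to $s$, which is the sense in which the order is arbitrary. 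This leaves
\begin{equation*}
\Lambda_N-\lambda_{j,\beta}=\sum_{k=1}^{2p}S'_k(\Lambda_N,\lambda_{j,\beta})+O(\rho^{-(s+1)\alpha_2}),
\end{equation*}
and it remains to replace $\Lambda_N$ by $\lambda_{j,\beta}+E_{s-1}$ inside each $S'_k$. For this I would write $S'_k$ as in \eqref{sk}, a multiple sum of products of the bounded factors $A(\cdot)$ over the $k$ quantities $\Lambda_N-\lambda_{j^i,\beta^i}$, and expand $S'_k(\mu_1,\lambda_{j,\beta})-S'_k(\mu_2,\lambda_{j,\beta})$ by the telescoping identity $\frac{1}{\mu_1-\lambda}-\frac{1}{\mu_2-\lambda}=\frac{\mu_2-\mu_1}{(\mu_1-\lambda)(\mu_2-\lambda)}$. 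With $\mu_1=\Lambda_N$ and $\mu_2=\lambda_{j,\beta}+E_{s-1}$ the inductive hypothesis gives $|\mu_1-\mu_2|=O(\rho^{-s\alpha_2})$; the condition $\beta_i\neq 0$ for all $i$ forces $\beta^1\neq\beta$ (and, when $k\geq 2$, $\beta^k\neq\beta$ as well), so by \eqref{cond6} and \eqref{neq} at least one denominator of size $\rho^{\alpha_2}$ survives in every term of the expansion, the rest being bounded below by a positive constant via \eqref{eq} (using also that $E_{s-1}=O(\rho^{-\alpha_2})$, so $\mu_2-\lambda_{j^i,\beta^i}$ keeps these sizes); and the uniform bound \eqref{absolute} controls the multiple sums. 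One then gets $S'_k(\Lambda_N,\lambda_{j,\beta})-S'_k(\lambda_{j,\beta}+E_{s-1},\lambda_{j,\beta})=O(\rho^{-(s+1)\alpha_2})$, and therefore
\begin{equation*}
\Lambda_N-\lambda_{j,\beta}=\sum_{k=1}^{2p}S'_k(\lambda_{j,\beta}+E_{s-1},\lambda_{j,\beta})+O(\rho^{-(s+1)\alpha_2})=E_s+O(\rho^{-(s+1)\alpha_2}),
\end{equation*}
which is \eqref{a1} with $k=s+1$.

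\textbf{Part (b).} Here I would fix an $N$ satisfying \eqref{b1}--\eqref{b2} and choose $c_{19}$ small enough that, for every triple $(N,j^i,\beta^i)$ arising in the $2p$-fold iteration, $|\Lambda_N-\lambda_{j^i,\beta^i}|\geq|\lambda_{j,\beta}-\lambda_{j^i,\beta^i}|-c_{19}>c_{10}$, using \eqref{cond6} when $\beta^i\neq\beta$ and the separation $|\lambda_j-\lambda_{j^i}|>c_{17}$ of the one-dimensional eigenvalues when $\beta^i=\beta$ and $j^i\neq j$. Then the iterability conditions \eqref{cond1} and \eqref{cond4} hold for this $N$, so \eqref{2p}, \eqref{neq}, \eqref{eq} and \eqref{order} are valid verbatim; dividing \eqref{2p} by $c(N,j,\beta)$ (permitted by \eqref{b2}) and using $\sum_{k=1}^{2p}S'_k=O(\rho^{-\alpha_2})$ --- which follows from \eqref{order} and the fact that each $S'_k$ carries at least one denominator of size $\rho^{\alpha_2}$ --- yields $\Lambda_N=\lambda_{j,\beta}+O(\rho^{-\alpha_2})$, the base case; the inductive step is then word for word that of part (a).

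\textbf{Expected obstacle.} The one genuinely delicate step is the perturbation estimate $S'_k(\Lambda_N,\lambda_{j,\beta})-S'_k(\lambda_{j,\beta}+E_{s-1},\lambda_{j,\beta})=O(\rho^{-(s+1)\alpha_2})$: one has to track, term by term in the multiple sums \eqref{sk} and \eqref{ck}, how many of the factors $\Lambda_N-\lambda_{j^i,\beta^i}$ are genuinely of size $\rho^{\alpha_2}$ --- which is precisely where $\beta_i\neq 0$ for every $i$, together with \eqref{cond6}, is indispensable --- and then couple that count with the uniform summability \eqref{absolute}. Checking how large $l$ (hence $p=l-d$) must be in terms of the target order $k$ and of $q\alpha$ is the remaining, routine, bookkeeping.
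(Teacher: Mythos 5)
Your proposal is correct and follows essentially the same route as the paper: induction on $k$ based on \eqref{2p} and \eqref{order}, division by $c(N,j,\beta)$ using the lower bound $\rho^{-q\alpha}$, and replacement of $\Lambda_N$ by $\lambda_{j,\beta}+E_{s-1}$ inside the $S'_k$ via the difference-of-reciprocals estimate, with $E_{s-1}=O(\rho^{-\alpha_2})$ (which the paper establishes by a short separate induction that you only assert, but which is immediate). The only organizational difference is that the paper proves part (b) first and obtains part (a) from it by invoking Lemma 3 for the existence of $N$ satisfying \eqref{b1}--\eqref{b2}, whereas you run the induction for (a) directly and then repeat it for (b); the content is the same.
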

\begin{proof}
By Lemma \eqref{lemma 3} $(a)-(b)$, there exists $N$ satisfying the conditions \eqref{b1} and \eqref{b2} in part $(b)$. Hence it sufficesto prove part $(b)$. By \eqref{cond6} and \eqref{b1}, the triples $(N,j^k,\beta^k)$ satisfy the iterability condition in \eqref{cond1}. Hence we can use \eqref{2p} and \eqref{order}. Now we prove the theorem by induction:
\\
\\
For $k=1$, to prove \eqref{a1}, we divide both sides of the equation \eqref{2p} by $c(N,j,\beta)$ and use the estimations \eqref{order}.
\\
\\
Suppose that \eqref{a1} holds for $k=s$, i.e.,
\begin{align}\label{s}
\Lambda_{N}=\lambda_{j,\beta}+E_{s-1}+O(\rho^{-s\alpha_2}).
\end{align}
To prove that \eqref{a1} is true for $k=s+1,$ in \eqref{2p} we substitute the expression \eqref{s} for $\Lambda_{N}$ into $\sum\limits_{k=1}^{2p}S_{k}^{'}(\Lambda_{N},\lambda_{j,\beta})$, then we get
\begin{align}
(\Lambda_{N}-\lambda_{j,\beta})c(N,j,\beta)=\left[\sum_{k=1}^{2p}S_k^{'}\left(\lambda_{j,\beta}+E_{s-1}+O(\rho^{-s\alpha_2}),\lambda_{j,\beta}\right)\right]c(N,j,\beta)+C^{'}_{2p}+O(\rho^{-p\alpha})
\end{align}
dividing the both sides of the last equality by $c(N,j,\beta)$ and using  Lemma \eqref{lemma 3}-$(ii)$, we obtain
\begin{align}\label{substii}
\Lambda_{N}=\lambda_{j,\beta}+\sum_{k=1}^{2p}S_k^{'}\left(\lambda_{j,\beta}+E_{s-1}+O(\rho^{-s\alpha_2}),\lambda_{j,\beta}\right)+O(\rho^{-(p-q)\alpha}).
\end{align}
Now we add and subtract the term $\sum\limits_{k=1}^{2p}S_k^{'}\left(E_{s-1}+\lambda_{j,\beta},\lambda_{j,\beta}\right)$ in \eqref{substii}, then we have
\begin{align}\label{subtlast}
\Lambda_{N}=\lambda_{j,\beta}+E_s+O(\rho^{-(p-q)\alpha})+\left[\sum_{k=1}^{2p}S_k^{'}\left(\lambda_{j,\beta}+E_{s-1}+O(\rho^{-s\alpha_2}),\lambda_{j,\beta}\right)-\sum_{k=1}^{2p}S_k^{'}\left(E_{s-1}+\lambda_{j,\beta},\lambda_{j,\beta}\right)\right].
\end{align}
Now, we first prove that $E_j=O(\rho^{-\alpha_2})$ by induction. $E_0=0$. Suppose that $E_{j-1}=O(\rho^{-\alpha_2})$, then $a=\lambda_{j,\beta}+E_{j-1}$ satisfies \eqref{neq} and \eqref{eq}. Hence we get
\begin{align}\label{s1}
S_1^{'}(a,\lambda_{j,\beta})=O(\rho^{-\alpha_2})\Rightarrow E_j=O(\rho^{-\alpha_2}).
\end{align}
To prove the theorem, we need to show that the expression  in the square brackets in \eqref{subtlast} is equal to $O(\rho^{-(s+1)\alpha_2})$. This can be easily checked by \eqref{s1} and the obvious relation
\begin{align}
\frac{1}{\lambda_{j,\beta}+E_{s-1}+O(\rho^{-s\alpha_2})-\lambda_{j^k,\beta^k}}-\frac{1}{\lambda_{j,\beta}+E_{s-1}+\lambda_{j^k,\beta^k}}=O(\rho^{-(s+1)\alpha_2}),
\end{align}
for $\beta^k\neq\beta$. The theorem is proved.
\end{proof}


\begin{thebibliography}{00}

\bibitem{Atilgan} At{\i}lgan, \c{S}., Karak{\i}l{\i}\c{c}, S., \& Veliev.
O. A. (2002). Asymptotic Formulas for the Eigenvalues of the
Schr\"{o}dinger Operator. \textit{Turk J Math, 26}, 215-227.

\bibitem{Berezin} Berezin, F. A., Shubin, M. A. \textit{The Schr\"{o}dinger Equation},
Kluwer Academic Publishers,  Dordrecht  (1991).

\bibitem{coskan1}Co\d{s}kan, D., Karak{\i}l{\i}\c{c}, S. (2011). High, energy asymptotics for eigenvalues of the Schr{\"o}dinger operator with a matrix potential, Mathematical Communications, 16(2).

\bibitem{Feldman} Feldman, J., Knoerrer, H., \& Trubowitz, E. (1990). The Perturbatively
Stable Spectrum of the Periodic Schr\"{o}dinger Operator.
\textit{Invent. Math., 100}, 259-300.

\bibitem{Feldman1} Feldman, J., Knoerrer, H., \& Trubowitz, E. (1991). The Perturbatively
Unstable Spectrum of the Periodic Schr\"{o}dinger Operator.
\textit{Comment. Math. Helvetica, 66}, 557-579.

\bibitem{Friedlanger} Friedlanger, L. (1990). On the Spectrum for the Periodic Problem for
the Schr\"{o}dinger Operator. \textit{Communications in Partial
Differential Equations, 15}, 1631-1647.

\bibitem{Hald} Hald, O. H., \& McLaughlin, J.R. (1996). Inverse Nodal Problems: Finding the
Potential from Nodal Lines. \textit{Memoirs of AMS, 572}, 119,
0075-9266.

\bibitem{Karalilic1} Karak{\i}l{\i}\c{c}, S., At{\i}lgan, \c{S}., \&
Veliev. O. A. (2005). Asymptotic Formulas for the Eigenvalues of the
Schr\"{o}dinger Operator with Dirichlet and Neumann Boundary
Conditions. \textit{Reports on Mathematical Physics (ROMP), 55}(2),
221-239.

\bibitem{Karakilic2} Karak{\i}l{\i}\c{c}, S., Veliev. O. A., \& At{\i}lgan, \c{S}. (2005).
Asymptotic Formulas for the Resonance Eigenvalues of the
Schr\"{o}dinger Operator. \textit{Turkish Journal of Mathematics,
29}(4), 323-347.

\bibitem{Karpeshina} Karpeshina, Y. (1992). Perturbation Theory for the Schr\"{o}dinger
Operator with a non-smooth Periodic Potential. \textit{Math.
USSR-Sb, 71}, 701-123.

\bibitem{Karpeshina1} Karpeshina, Y. (1996). Perturbation series for the
Schr\"{o}dinger Operator with a Periodic Potential near Planes of
Diffraction. \textit{Communication in Analysis and Geometry, 4}, 3,
339-413.

\bibitem{Karpeshina2} Karpeshina, Y. (2002). On the Spectral
Properties of Periodic Polyharmonic Matrix \\Operators. \textit{Indian
Acad. Sci. (Math. Sci.), 112}(1), 117-130.

\bibitem{Kato} Kato, T. \textit{Perturbation Theory for Linear
Operators}, Springer, Berlin (1980)

\bibitem{Reed} Reed, M., Simon, B. \textit{Methods of Modern Mathematical
Physics}, 3rd ed., New York, San Francisco, London: Academic Press
(1987)vol. IV

\bibitem{Veliev1} Veliev, O. A. (1987). Asymptotic Formulas for the Eigenvalues
of the Periodic Schr\"{o}dinger Operator and the Bethe-Sommerfeld
Conjecture. \textit{Functsional Anal. i Prilozhen, 21}, 2, 1-15.


\bibitem{Veliev4} Veliev, O. A. (2006). Asymptotic Formulae for the Bloch
Eigenvalues Near Planes of \\Diffraction. \textit{Reports on Mathematical Physics(ROMP), 58(3)}, 445-464.

\bibitem{Veliev5} Veliev, O. A. (2007). Perturbation Theory for the Periodic Multidimensional Schr\"{o}dinger Operator and
the Bethe-Sommerfeld Conjecture. \textit{International Journal of
Contemporary Mathematical Sciences, 2 (2)}, 19-87.




\end{thebibliography}
\end{document}